\newcommand{\JSR}{\operatorname{JSR}}
\newcommand{\coast}{\operatorname{co}_{\ast}}
\newcommand{\comin}{\operatorname{co}_-}
\newcommand{\norm}[1]{\left\lVert#1\right\rVert}
\newcommand{\vardot}{\mathord{\,\cdot\,}} 
\newcommand{\numberthis}{\addtocounter{equation}{1}\tag{\theequation}}
\renewcommand\bf{\bfseries}
\newcommand{\fwsym}[1]{\parbox[c]{.9em}{\protect\ensuremath{#1}}}
\newcommand{\ppmm}{\raisebox{-0.05ex}{\fwsym{\pm}}}
\newcommand{\Circ}{{\raisebox{-0.2ex}{\scalebox{1.4}{$\circ$}}}}
\newcommand{\pp}{\fwsym{+}}
\newcommand{\mm}{\raisebox{-0.24ex}{\fwsym{-}}}
\newcommand{\nn}{\fwsym{\Circ}}
\newcommand{\simin}{\mathop{\in}\limits^{\vbox to -.4ex{\kern-0.6ex\hbox{\scriptsize$\sim$}\vss}}}
\newcommand{\RR}{{\mathbb R}}
\newcommand{\NN}{{\mathbb N}}
\newcommand{\ZZ}{{\mathbb Z}}
\theoremstyle{acmplain}
\newtheorem{theorem}{Theorem}[section]
\newtheorem{lemma}[theorem]{Lemma}
\theoremstyle{acmdefinition}
\newtheorem{algorithm}[theorem]{Algorithm}
\newtheorem{example}[theorem]{Example}
\newtheorem{remark}[theorem]{Remark}
\begin{document}

\acmJournal{TOMS}
\title{Algorithm xxx: Improved  invariant polytope algorithm and applications}
\author{Thomas Mejstrik}
\affiliation{%
    \department{Faculty of Mathematics}
    \institution{University of Vienna}
    \streetaddress{Universit\"atsring 1}
    \postcode{1010 Vienna}
    \country{Austria}        
}
\email{tommsch@gmx.at}
\position{PostDoc}

\keywords{joint spectral radius, invariant polytope algorithm, parallelization, Daubechies wavelets, capacity of codes, norm estimation}

 \begin{CCSXML}
<ccs2012>
<concept>
    <concept_id>10002950.10003714.10003715.10003719</concept_id>
    <concept_desc>Mathematics of computing~Computations on matrices</concept_desc>
    <concept_significance>500</concept_significance>
</concept>
<concept>
    <concept_id>10002950.10003705.10011686</concept_id>
    <concept_desc>Mathematics of computing~Mathematical software performance</concept_desc>
    <concept_significance>300</concept_significance>
</concept>
<concept>
    <concept_id>10010147.10010169</concept_id>
    <concept_desc>Computing methodologies~Parallel computing methodologies</concept_desc>
    <concept_significance>300</concept_significance>
</concept>
</ccs2012>
\end{CCSXML}

\ccsdesc[500]{Mathematics of computing~Computations on matrices}
\ccsdesc[300]{Mathematics of computing~Mathematical software performance}
\ccsdesc[300]{Computing methodologies~Parallel computing methodologies}

\setcopyright{acmcopyright}

\begin{abstract}
In several papers of 2013~--~2016, Guglielmi and Protasov made a 
breakthrough in the problem of the joint spectral radius computation, developing
the invariant polytope algorithm which for most matrix families 
finds the exact value of the joint spectral radius. This algorithm found 
many applications in problems of functional analysis, approximation theory, 
combinatorics, etc.. In this paper we propose a modification of the invariant polytope 
algorithm making it roughly 3 times faster (single threaded), 
suitable for higher dimensions and parallelise it.
The modified version works for most matrix families of dimensions up to 25, 
for non-negative matrices up to 3000.
Besides we introduce a new, fast algorithm, called modified Gripenberg algorithm, 
for computing good lower bounds for the joint spectral radius.
The corresponding examples and statistics of numerical results are provided.  
Several applications of our algorithms are presented. In particular, 
we find the exact values of the regularity exponents of Daubechies wavelets up to order 42
and the capacities of codes that avoid certain difference patterns.
\end{abstract}

\maketitle

\section{Introduction and notation}

The \emph{joint spectral radius} ($\JSR$) of a set of matrices is a quantity which describes the maximal asymptotic growth rate of the norms of products of matrices from that set (with repetitions permitted). Precisely,
given a finite set $\mathcal{A}=\{A_j:j=1,\ldots,J\}\subseteq\RR^{s\times s}$, $s\in\NN$, then
\begin{equation}\label{equ_jsr}
\JSR(\mathcal{A}):=
\lim_{n\rightarrow\infty}
\max_{A_j\in\mathcal{A}}\left\| A_{j_n}\cdots A_{j_2}A_{j_1}\right\|^{1/n}.
\end{equation}
In~\cite{BW92} it is proved that (for finite $\mathcal{A}$)
\begin{equation}\label{equ_generalized_jsr}
\JSR(\mathcal{A})=
\limsup_{n\rightarrow\infty,\, A_j\in\mathcal{A}}
\rho( A_{j_n}\cdots A_{j_2}A_{j_1})^{1/n},
\end{equation}
where $\rho$ is the classical \emph{spectral radius} of a matrix.
With $\#\mathcal{A}$ we denote the \emph{number of elements} of the set $\mathcal{A}$.
If $\#\mathcal{A}=1$, then the $\JSR$ reduces to the spectral radius of a matrix.

The $\JSR$ has been defined in~\cite{Rota60} and since appeared in many (seemingly unrelated) mathematical applications, e.g.\  for
computing the regularity of wavelets and of subdivision schemes~\cite{DL1992}, 
the capacity of codes~\cite{MOS01},
the stability of linear switched systems~\cite{Gur95} or
in connection with the Euler partition function~\cite{Prot00}.

The computation of the $\JSR$ is a notoriously hard problem. 
Even for non-negative matrices with rational coefficients 
this problem is NP-hard~\cite{BT97}. Moreover,  
the question whether $\JSR(\mathcal{A})\leq1$ for a given set $\mathcal{A}$ is algorithmically  undecidable~\cite{BT00}.
Most algorithms which try to compute or to approximate the $\JSR$ make use of the inequality~\cite{DL1992}
\begin{equation}\label{equ_bound_jsr}
\max_{A_{j}\in\mathcal{A}}\rho\left( A_{j_k}\cdots A_{j_1}\right)^{1/k}
\leq
\JSR(\mathcal{A})
\leq
\max_{A_j\in\mathcal{A}}
\big\| 
A_{j_k}\cdots A_{j_1}
\big\|^{1/k},
\end{equation}
which holds for any $k\in\NN$. For a product $A_{j_k}\cdots A_{j_1}$ we say the number $\rho\left( A_{j_k}\cdots A_{j_1}\right)^{1/k}$ is its \emph{normalized spectral radius}. 
Equation~\eqref{equ_bound_jsr} tells us that the normalized spectral radius of every product 
is a valid lower bound for the $\JSR$, on the contrary, 
one has to compute the norms of all products of a fixed length $k\in\NN$ to obtain a valid upper bound.

If there exists a product $\Pi=A_{j_n}\cdots A_{j_1}$, $A_j\in\mathcal{A}$, such that $\rho(\Pi)^{1/n}=\JSR(\mathcal{A})$, we call the product a \emph{spectral maximizing product (s.m.p.)}.
It has been shown that 
there exist sets of matrices such that the normalized spectral radius of every finite product is strictly less than the $\JSR$~\cite{HMST11}. In other words, not all sets of matrices posses an s.m.p..
It is an open question whether pairs of binary matrices always posses an s.m.p.~\cite{BJ08}.

An s.m.p.\ is called \emph{dominant} if there exists $\gamma>0$ 
such that $\gamma<\JSR(\mathcal{A})$ and 
$\rho({A}_{j_l}\cdots {A}_{j_1})^{1/l}<\gamma$ whenever 
${A}_{j_l}\cdots {A}_{j_1}$ is not an s.m.p..
Dominant s.m.p.s play a role for the termination of the invariant polytope algorithm 
discussed in Sections~\ref{sec_invpoly} and~\ref{sec_modinvpoly}.

There are three common strategies to exploit Inequality~\eqref{equ_bound_jsr}:
$(i)$ Compute all products up to a length $k\in\NN$~\cite{Grip96,MOS01};
$(ii)$ Take a suitable family of norms and minimize the right hand side of~\eqref{equ_bound_jsr} 
with respect to that family~\cite{AJPR11,BN05,PJ08,BJP10};
$(iii)$ Construct a norm which gives good estimates in~\eqref{equ_bound_jsr} for short products, 
preferably for products of length one~\cite{GP13,GP16,GWZ05,GZ08,GZ09,Koz10b}. 
The Gripenberg algorithm~\cite{Grip96}, discussed in Section~\ref{sec_modgrip}, belongs to class $(i)$,
the invariant polytope algorithm~\cite{GP13,GP16,GWZ05,GZ08,GZ09}, discussed in Section~\ref{sec_invpoly}, to class $(iii)$.
The invariant polytope algorithm is, up to now, one of only two algorithms which can can compute the exact value of the $\JSR$ for a large number of matrix families.
The second one is the infinite tree algorithm~\cite{MR01} which does not belong to any of the classes above.
In this paper we concentrate on the invariant polytope algorithm.

We will call 
a norm $\|\cdot\|$ \emph{extremal} for $\mathcal{A}$ if 
\begin{equation}
\|A_j x\|\leq \JSR(\mathcal{A})\cdot\|x\| \text{ for all } x\in\RR^s \text{ and for all } A_j\in\mathcal{A}.
\end{equation}

In~\cite{Bar88} it is shown that every \emph{irreducible} family of matrices, 
i.e.\ a family of matrices which have no trivial common invariant subspaces, possesses an extremal norm. 
Its construction is easily described in terms of the set 
\begin{equation}\label{equ_invariantbody}
P(v)=
\operatorname{co} 
\bigcup_{n\in\NN_0,\ {A}_j\in{\mathcal{A}}} 
\left\{\pm\,{A}_{j_n}\cdots {A}_{j_1} v \right\},
\end{equation}
where $\operatorname{co}$ denotes the \emph{convex hull} and $v\in\RR^s$.

\begin{theorem}\cite{GZ08,BW92}.
\label{thm_invariantbody}
If ${\mathcal{A}}$ is irreducible, $\JSR(\mathcal{A})\geq 1$ and for a given $v\in\RR^s$ the set $P(v)$ is bounded and has non-empty interior, 
then $\JSR({\mathcal{A}})=1$ and $P(v)$ is the unit ball of an extremal norm $\|\cdot\|_{P(v)}$ for ${\mathcal{A}}$.

Conversely, if ${\mathcal{A}}$ is irreducible and $\JSR({\mathcal{A}})=1$, then for any $v\in\RR^s$, $P(v)$ is a bounded subset of $\RR^s$.
\end{theorem}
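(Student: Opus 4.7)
For the forward direction, I would first collect the structural features of $P = P(v)$ that are immediate from its definition: it is convex (it is a convex hull) and centrally symmetric (thanks to the $\pm$). Note that the $n=0$ term in the union shows $\pm v \in P$. Combined with the hypotheses that $P$ is bounded and has non-empty interior, its closure $\overline{P}$ is a symmetric convex body in $\RR^s$, so its Minkowski functional $\|x\| := \inf\{\lambda > 0 : x \in \lambda\overline{P}\}$ is a genuine norm whose closed unit ball equals $\overline{P}$.

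The core step is to establish the invariance $A_j P \subseteq P$ for every $A_j \in \mathcal{A}$. This is a direct consequence of how the generating set is built: each generator $\pm A_{j_n}\cdots A_{j_1} v$ is mapped by $A_j$ to $\pm A_j A_{j_n}\cdots A_{j_1} v$, which is another generator (of one greater word length). Since $A_j$ is linear and the convex hull operation commutes with linear maps, this passes to all of $P$ and then to $\overline{P}$ by continuity. Translating back, $\|A_j x\| \leq \|x\|$ for every $x \in \RR^s$ and every $A_j \in \mathcal{A}$. Iterating and using~\eqref{equ_bound_jsr} (upper bound) with $k=n$ in the induced operator norm gives $\JSR(\mathcal{A})\leq 1$; combined with the assumption $\JSR(\mathcal{A})\geq 1$ this forces $\JSR(\mathcal{A})=1$, and the inequality above then reads $\|A_j x\|\leq \JSR(\mathcal{A})\,\|x\|$, so $\|\cdot\|$ is extremal for $\mathcal{A}$.

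For the converse I would invoke Barabanov's theorem~\cite{Bar88}, already cited above, which says that every irreducible family $\mathcal{A}$ admits an extremal norm $\|\cdot\|_\ast$. Under the hypothesis $\JSR(\mathcal{A})=1$ this gives $\|A_j x\|_\ast \leq \|x\|_\ast$ for all $j$ and $x$, so a one-line induction on word length yields $\|A_{j_n}\cdots A_{j_1} v\|_\ast \leq \|v\|_\ast$ for every finite product. Hence all generators of $P(v)$ lie inside the $\|\cdot\|_\ast$-ball of radius $\|v\|_\ast$, which is a bounded set of $\RR^s$; its convex hull $P(v)$ is therefore also bounded.

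\textbf{Main obstacle.} The only genuinely non-trivial ingredient is the existence of an extremal norm for irreducible families used in the converse, i.e.\ Barabanov's theorem; everything else is bookkeeping about convex hulls and linearity. A minor technical point in the forward direction is that $P(v)$ as defined is a convex hull (not a closed convex hull), so one should pass to $\overline{P(v)}$ before invoking the Minkowski functional; this is harmless since boundedness plus non-empty interior ensures $\overline{P(v)}$ is a convex body with the same Minkowski functional and the invariance $A_j P(v) \subseteq P(v)$ extends to the closure by continuity.
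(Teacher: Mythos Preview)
The paper does not supply a proof of this theorem; it is quoted from~\cite{GZ08,BW92} without argument, so there is nothing in the paper to compare against. Your proposal is correct and is essentially the standard proof found in those references: the forward direction rests solely on the observation that the generating set of $P(v)$ is closed under left-multiplication by each $A_j$, which passes through convex hulls and closures to give $\|A_j\|_{\overline{P(v)}}\le 1$, whence $\JSR(\mathcal{A})\le 1$; the converse is exactly the one-line consequence of Barabanov's extremal-norm theorem that you describe. Your remark about taking the closure before forming the Minkowski functional is the right technical fix and matches how the cited sources handle the point.
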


Clearly, the unit ball of a norm completely describes the corresponding norm.
Given $P\subseteq\RR^s$, a closed, convex and \emph{balanced} 
($\alpha P\subseteq P$ for all $|\alpha|<1$) 
body with non-empty interior, the so-called \emph{Minkowski norm} 
$\|\cdot\|_P:\RR^s\rightarrow\RR$,
\begin{equation}
\|\cdot\|_P=\inf\{r>0:x\in r P \}
\end{equation}
fulfils $\{x\in\RR^s:\|x\|_P\leq1\}=P$.

The idea of the invariant polytope algorithm~\ref{alg_invpoly} is to construct an invariant set $P$ for the matrices in 
the set $\mathcal{A}$ in finitely many steps. This is possible when $P$ is a polytope.

We will describe polytopes by the convex hull of its vertices.
For finite $V\subseteq\RR^s$ we define the \emph{symmetrized convex hull} of $V$  by
\begin{equation}
\operatorname{co}_s V=
\Big%
\{
x\in \RR^s: x=\sum_{v\in V} t_v v 
\quad\text{with}\quad \sum_{v\in V}|t_v|\leq 1,\ t_v\in \RR^s
\Big%
\}
=
\operatorname{co}(V\cup -V).
\end{equation}
For finite $V\subseteq\RR^s_+$ we define the \emph{cone} of $V$ with respect to the first orthant by
\begin{equation}
\operatorname{co}_{-}V=
\big\{
x\in \RR^s_+ : x=y-z,\ y\in\operatorname{co}(V),\ z\in\RR^s_+
\big\}
.
\end{equation}
For simplicity, we denote with $\coast V$ any of these convex hulls 
($\operatorname{co}$, 
$\operatorname{co}_s$, 
$\operatorname{co}_{-}$)
depending on the context
.

In all cases we identify a (finite) set $V$ with the matrix whose columns are the coordinates of the points $v\in V$. 
Lemma~\ref{thm_estimate_1} shows 
properties of Minkowski norms corresponding to 
various 
convex hulls.

\begin{lemma}\label{thm_estimate_1}
    Let $V\subseteq\RR^s$
    and $x\in\RR^s$. Then
    \begin{enumerate}
        \item If $W$ are the vertices of another central symmetric polytope with non-empty interior such that 
        $\coast W\subseteq \coast V$, then
        $\|\cdot\|_{\coast V}\leq \|\cdot\|_{\coast W}$.
        
        \item $\|x\|_{\coast V}\leq\|t\|_1\leq\sqrt{m}\|t\|_2$, where $Vt=x$, $t\in\RR^m$.
        
        \item $\|x\|_{\operatorname{co}_s V} \geq \|V^+x\|_2$, 
        where $V^+$ is the Moore-Penrose pseudo-inverse of $V$.
        
        \item If there exists $w\in\RR^s$ such that 
        $|\langle w,v\rangle| < |\langle w,x\rangle|$ for all $v\in V$, 
        then $x\not\in\operatorname{co}_{s} V$.
        
        \item \label{thm_estimate_1_pos}
        If $V\subseteq\RR^s_+$, $x\in\RR^s_+$ 
        and there exists $v\in V$ such that $x_l\leq v_l$ for all $l=1,\ldots,s$, 
        then $x\in \operatorname{co}_{-} V$.
        
        \item\label{thm_estimate_2_pos}
        If $V\subseteq\RR^s_+$, $x\in\RR^s_+$ and there exists $l\in\{1,\ldots,s\}$ 
        such that $x_l > v_l$ for all $v\in V$, 
        then $x\notin\operatorname{co}_{-} V$.

    \end{enumerate}
\end{lemma}
\begin{proof}
    \begin{enumerate}
        \item This immediately follows from the definition of the Minkowski norm.
        
        \item 
        Let $x\in\operatorname{co}_s V$ and $t\in\RR^{\#V}$ such that $x=Vt$.
        Define $\tilde{x}=\frac{x}{\|x\|_{\operatorname{co}_s V}}\in\partial\operatorname{co}_s V$ and 
        $\tilde{t}=\frac{t}{\|x\|_{\operatorname{co}_s V}}$.
        It follows that $\tilde{x}=V\tilde{t}$ with $\|\tilde{t}\|_1\geq 1$.
        Indeed, $\|\tilde{t}\|_1<1$ would imply that $\tilde{x}\in\left(\operatorname{co}_s V\right)^\circ$. Clearly,
        $1=\|\tilde{x}\|_{\operatorname{co}_s V}\leq \|\tilde{t}\|_1$ and
        $\|x\|_{\operatorname{co}_s V}\leq \|t\|_1$.
        Finally, by $(1)$, $\|x\|_{\operatorname{co}_- V} \leq \|x\|_{\operatorname{co}_s V}$.
        The second inequality follows from the equivalence of norms.
        
        \item 
        Let $x\in\operatorname{co}_s V$ and $t\in\RR^{\#V}$ such that $x=Vt$.
        It follows that $\|t\|_1\geq \|t\|_2\geq \|V^+x\|_2$ because, 
        by construction of the Moore-Penrose pseudo inverse, 
        $V^+ x$ is the unique solution to $Vt=x$ with minimum 2-norm.
        Finally, by ~$(2)$, $\|x\|_{\operatorname{co}_s V}=\min_{t\in\RR^{\# V}: Vt=x} \|t\|_1\geq \|V^+x\|_2$.
        
        \item 
        If  $|\langle w,v\rangle| < |\langle w,x\rangle| $ for all $v\in V$, 
        then there exists a hyperplane which separates the point $x$ and the polytope 
        $\operatorname{co}_{s} V$. From this the claim directly follows.
        
        \item Defining $z:=v-x$ we see that $z\in \RR^s_+$ which implies $x=v-z\in\operatorname{co}_{-}  V$.
        
        \item Since $\operatorname{co}_s V$ is convex, $y_l\leq v_l$ for all $y\in \operatorname{co} V\cap\RR^s_+$. 
        Since $z_l>0$ it follows that $y_l-z_l\leq y_l \leq v_l < x_l$.
        Thus, there does not exist $y,z$ such that $x=y-z$.
    \end{enumerate}
\end{proof}

\begin{remark}
    Estimate~\ref{thm_estimate_1}~\eqref{thm_estimate_1_pos} uses the fact that the norms $\|\cdot\|_{\operatorname{co}_{-}V}$ are orthant monotonic,
    i.e.\ %
    $
    \|x\|_{\operatorname{co}_{-}V}
    \leq
    \|y\|_{\operatorname{co}_{-}V}
    $, $x,y\in\RR^s_+$
    whenever
    $0 \leq x_i \leq  y_i$
    for all $i=1,\ldots,s$.
    It would be interesting to know whether and when Minkowski norms
    $\|\cdot\|_{\operatorname{co}_s V}$ 
    or Minkowski norms composed with linear mappings 
    $\|M\cdot\|_{\operatorname{co}_s V}$, $M\in\RR^{s\times s}$,
    are orthant monotonic.
    This would allow to transfer the estimate~\ref{thm_estimate_1}~\eqref{thm_estimate_1_pos} to Minkowski norms corresponding to symmetrised convex hulls $\operatorname{co}_s V$.
\end{remark}

\subsection{Invariant polytope algorithm and outline for the paper}\label{sec_invpoly}
In this section we present the idea of the invariant polytope algorithm.
The major topic of this paper are modifications to the invariant polytope algorithm making it
\begin{itemize}
\item faster and parallel,
\item more robust and
\item more efficient for larger matrices.
\end{itemize}
These modifications are outlined in Section~\ref{sec_outline}. 
The actual modified invariant polytope algorithm~\ref{alg_modinvpoly} is given in Section~\ref{sec_modinvpoly}.
In Section~\ref{sec_modgrip} we introduce the modified Gripenberg algorithm~\ref{alg_modgrip} 
which is capable of finding very long s.m.p.-candidates in short time.
Section~\ref{sec_examples} is devoted to numerical examples showing where the algorithms from 
Sections~\ref{sec_modgrip} and~\ref{sec_modinvpoly} perform well and where they are not applicable any more. 

\begin{algorithm}[Invariant polytope algorithm~\cite{GP13,GP16,GWZ05,GZ08,GZ09}]\label{alg_invpoly}~
Given 
$\mathcal{A}=\{A_j:j=1,\ldots,J\}\subseteq\RR^{s\times s}$.
\begin{enumerate}
\item\label{alg_invpoly_cand}
For some $D\in\NN$ look over all products of matrices in $\mathcal{A}$ of length less than $D$ 
and choose a shortest product $\Pi_1$ such that $\rho_c:=\rho(\Pi_1)^{1/l_1}$ is maximal, 
where $l_1$ is the length of the product and call $\Pi_1$ 
\emph{spectral maximizing product-candidate} (\emph{s.m.p.-candidate}). 
Set $\tilde{\mathcal{A}}:=\rho_c^{-1}\mathcal{A}$.
Now we try to prove that $\JSR(\tilde{\mathcal{A}})\leq 1$. 

\item\label{alg_invpoly_leadingeigenvector}
Let $v_1$ be the \emph{leading eigenvector} of $\Pi_1$
i.e.\ the eigenvector with respect to the largest eigenvalue in magnitude

\item\label{alg_invpoly_cyclicroot}
Construct the cyclic root $\mathcal{H}$: Let $v^{(i)}_1$, $i=1,\ldots,l_1-1$, 
be the leading eigenvectors of the cyclic permutations of $\tilde{\Pi}_1$, 
i.e.\ for $\tilde{\Pi}_1=\tilde{A}_{j_{l_1}}\cdots \tilde{A}_{j_1}$ 
we get $v^{(i)}_1:=\tilde{A}_{j_i}\cdots \tilde{A}_{j_1}v_1$.
Set $\mathcal{H}:=\{v^{(0)}_1,\ldots,v^{(l_1-1)}_1\}$ and $V:=\mathcal{H}$.

\item \label{alg_invpoly_testinside}
For all $v\in V$ and for all $j=1,\ldots,J$
\begin{itemize}
\item[]
If  $\|\tilde{A}_j v\|_{\coast V}>1$ set $V:= V \cup \tilde{A}_j v$.

Depending on $\mathcal{A}$ and the leading eigenvector $v_0$ we use different convex hulls:

\emph{case $(P)$}: If all entries of the matrices $A_j$ are non-negative, 
then we can take a non-negative leading eigenvector~$v_0$ in 
step~\eqref{alg_invpoly_leadingeigenvector}
and use $\comin$.

\emph{case $(R)$}: If the matrices $A_j$ have positive and negative entries 
and the leading eigenvector~$v_0$ is real, then we use $\operatorname{co}_s$;


\end{itemize}
\item
Repeat step~\eqref{alg_invpoly_testinside} until $\tilde{\mathcal{A}}V\subseteq\coast V$.
\item If $\tilde{\mathcal{A}}V\subseteq\coast V$, then the algorithm terminates and we have found an 
invariant polytope~$\coast V$, 
which implies that $\|\tilde{A}_j\|_{\coast V}\leq 1$ for all $j=1,\ldots,J$, 
or in other words, $\JSR(\tilde{\mathcal{A}})\leq 1$.
\end{enumerate}

\end{algorithm}

\begin{remark}
In step~\ref{alg_invpoly}~\eqref{alg_invpoly_testinside} we actually add a vertex 
$\tilde{A}_j v\not\in\mathcal{H}$ even if it lies slightly inside of the polytope, 
i.e.\ if $\|\tilde{A}_j v\|_{\coast V}>1-\epsilon$, 
where $\epsilon>0$ is the accuracy up to which the norm can be computed. 
This is important to obtain a mathematically rigorous result.
\end{remark}

\begin{remark}
    If neither \emph{case $(P)$} nor \emph{case $(R)$} applies, i.e.\ 
    the matrices are not strictly non-negative and have complex leading eigenvalues, 
    then one would have to consider complex polytopes, which is not discussed in this paper.
\end{remark}

Figure~\ref{fig_invpoly} presents the invariant polytope algorithm on some concrete example.

\begin{figure}[ht]
{
    \centering
    \includegraphics{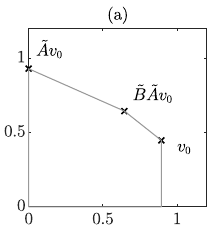}
    \includegraphics{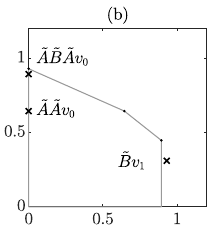}
    \includegraphics{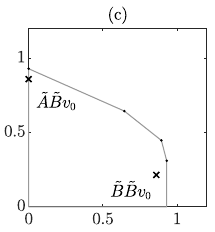}
    \caption[]%
    {The polytope $\operatorname{co}_- V$ 
        as constructed by the invariant polytope algorithm~\ref{alg_invpoly} 
        for the matrices 
        $A=\left[\begin{smallmatrix}0&0\\1&1\end{smallmatrix}\right]$,
        $B=\left[\begin{smallmatrix}1&1\\0&1\end{smallmatrix}\right]$,
        In $(a)$ we see the cone $\comin \mathcal{H}$ with respect to the cyclic root
        $\mathcal{H}=\big\{v_1,\tilde{A}v_1, \tilde{B}\tilde{A}v_1\big\}$.
        In $(b)$ we see the vertices $\tilde{A}v_1$, $\tilde{A}\tilde{A}v_1$ and $\tilde{A}\tilde{B}\tilde{A}v_1$
        constructed in the first iteration.
        In $(c)$ we see the new polytope $\comin (\mathcal{H}\cup\tilde{B}v_1)$
        together with the vertices  $\tilde{B}\tilde{B}v_1$ and $\tilde{A}\tilde{B}v_1$
        constructed in the second iteration, which are all mapped into the interior of
        $\operatorname{co}_- \mathcal{H}\cup\tilde{B}v_1$.
More precise:    
\newline
All entries of $A$ and $B$ are non-negative, thus, we are in case $(P)$ and use the cone hull
$\operatorname{co}_-$ to compute the Minkowski-norms in step~\ref{alg_invpoly}~\ref{alg_invpoly_testinside}.
\newline
    {($1$)} We choose 
    $\Pi_1=BBA$,
    which is the product with the highest averaged spectral radius 
    among all products of length less or equal than three.
    Thus, $l_1=3$, $\rho_c=\rho(\Pi_1)^{1/l_1}=3^{1/3}$
    and we define 
    $\tilde{A}=\rho_c^{-1}A$, 
    $\tilde{B}=\rho_c^{-1}B$, 
    $\tilde{\mathcal{A}}=\{\tilde{A},\tilde{B}\}$,
    $\tilde{\Pi}_1=\tilde{B}\tilde{B}\tilde{A}$.
    \newline
    {($2$)} 
    The s.m.p.-candidate $\tilde{\Pi}_1$ has only one simple leading eigenvalue $1$ 
    with a corresponding eigenvector $v_1=v_1^{(0)}$ given by
    $v_1=v_1^{(0)}=5^{-1/2}[\,2\quad 1\,]^T$.
    \newline
    {($3$)} 
    We construct the cyclic root 
    $
    \mathcal{H}=
    \{v_1^{(0)},v_1^{(1)},v_1^{(2)}\}=\allowbreak
    \{v_1,\tilde{A}v_1, \tilde{B}\tilde{A}v_1\}=\allowbreak
    5^{-1/2}
    \big\{
    [\,1\quad 2\,]^T,\allowbreak
    [\,0\quad 3^{2/3}\,]^T,\allowbreak
    [\,3^{1/3}\quad 3^{1/3}\,]^T\allowbreak
    \big\}
    $        
    and set $V=\mathcal{H}$.
    \newline
    {($4$, first iteration) }
    We compute the norms of the vectors $\tilde{\mathcal{A}}V\setminus V$.
    The vector $\tilde{B}v_1$ is outside of the polytope $\operatorname{co}_- V$,
    $\|\tilde{B}v_1\|_{\operatorname{co}_- V}\simeq 1.04 \geq 1$,
    and thus, it is added to the set $V$.
    All other vectors, i.e.\ $\tilde{A}\tilde{A}v_1$ and 
    $\tilde{A}\tilde{B}\tilde{A}v_1$, 
    in the first iteration are inside of $\operatorname{co}_-  V\cup \tilde{B}v_1$;
    $\|\tilde{A}\tilde{A}v_1\|_{\operatorname{co}_- V\cup \tilde{B}v_1}\simeq 0.69<1$,
    $\|\tilde{A}\tilde{B}\tilde{A}v_1\|_{\operatorname{co}_- V\cup\tilde{B}v_1}\simeq 0.96<1$.
    \newline
    {($4$, second iteration)}
    We repeat step~\ref{alg_invpoly_testinside} and test the vectors from the set
    $\mathcal{A}(V\cup\tilde{B}v_1)\setminus (V\cup\tilde{B}v_1)$;
    $\|\tilde{B}\tilde{B}v_1\|_{\operatorname{co}_- V\cup \tilde{B}v_1}\simeq 0.92<1$,
    $\|\tilde{A}\tilde{B}v_1\|_{\operatorname{co}_- V\cup \tilde{B}v_1}\simeq 0.92<1$.
    \newline
    {($5$) }
    All vertices from the second iteration are mapped into the interior of the polytope
    $P=\operatorname{co}_- V\cup\tilde{B}v_1$, 
    therefore, $P$ is $\tilde{\mathcal{A}}$-invariant and $\JSR(\mathcal{A})=\rho(\Pi_1)^{1/l_1}=3^{1/3}\simeq 1.4422$.
}
\Description[Images showing the iterative construction of an invariant polytope.]{Images showing the iterative construction of an invariant polytope.}
\label{fig_invpoly}   
}     
\end{figure}

\section{Summary of the main modifications}\label{sec_outline}
In this section we present the modifications to the invariant polytope algorithm~\ref{alg_invpoly} 
and explain their importance. For more details see Sections~\ref{sec_modgrip} and~\ref{sec_modinvpoly}.

\subsection{New balancing procedure}
In steps~\eqref{alg_invpoly_leadingeigenvector} and \eqref{alg_invpoly_cyclicroot}
 of the explanation of the invariant polytope algorithm in~\ref{alg_invpoly},
 we only had one cyclic root, corresponding to the one leading eigenvector $v_1$.
If there happens to be more than one cyclic root, 
then it is necessary to balance the sizes of the cyclic roots to each other 
in order to ensure termination of the invariant polytope algorithm~\cite{GP16}.
There are (at the moment) three reasons why multiple cyclic roots occur:
$(1)$ The s.m.p.-candidate $\Pi_1$ possesses more than one leading eigenvalue,
or its leading eigenvalue is not simple,
or there are more than one s.m.p.-candidates $\Pi_1\ldots,\Pi_R$, $R\in\NN$.
But, multiple cyclic roots also can be generated by
$(2)$ artificially adding cyclic roots or by
$(3)$ artificially adding individual vertices.
Technique $(2)$ usually is employed whenever there are matrix products 
whose averaged spectral radius is \emph{nearly} that of the s.m.p.-candidate. 
Such matrix products are usually called \emph{nearly-s.m.p.s}~\cite[Remark~3.7]{GP16}.
If the leading eigenvectors of a nearly-s.m.p. are complex, 
one can take a real pair of vectors spanning the space generated by the complex eigenvectors.
Technique $(3)$ usually is employed whenever the initial polytope $\coast \mathcal{H}$ is very flat~\cite[Section 4]{GP16}.

The original balancing procedure described in~\cite[Sections~2.3 and 3]{GP16} 
may fail for multiple cyclic roots caused by the presence of nearly-s.m.p.s..
In Section~\ref{sec_modinvpoly_balancing} we improve on the original implementation 
such that it always works  and, in addition, automatically.
In Section~\ref{sec_modinvpoly_extravertex} we suggest an automated procedure 
how to select good nearly-s.m.p.s and extra vertices.
Aside from that, Example~\ref{ex_balancing_trans} presents a set of matrices
where it was wrongly assumed that no balancing is necessary.

\subsection{Finding s.m.p.\ candidates}
The invariant polytope algorithm~\ref{alg_invpoly} only terminates if all s.m.p.-candidates 
$\Pi_r$, $r=1,\ldots,R$, are indeed s.m.p.s.. 
Thus, the invariant polytope algorithm~\ref{alg_invpoly} 
heavily relies on correct initial guesses for the s.m.p.-candidates. 
A plain brute-force search in~\ref{alg_invpoly}~\eqref{alg_invpoly_cand} will fail, if the s.m.p.s length is large. 
Our numerical tests have shown that even for random pairs of $2\times 2$ 
matrices s.m.p.s of length greater than 30 are not uncommon. 
A particular easy example 
is given in Example~\ref{ex_longsmp}. 
We present two new methods that search for s.m.p.s efficiently in Sections~\ref{sec_modgrip} and~\ref{sec_modinvpoly_testrho}.

\subsection{Bounds for the JSR}
If the invariant polytope algorithm~\ref{alg_invpoly} does not find an invariant polytope in reasonable time,  
it can still give an upper bound for the $\JSR$ \emph{after} termination.
In Lemma~\ref{thm_termination} we show that our modified invariant polytope algorithm 
can return bounds for the $\JSR$ in \emph{each} iteration 
of the modified invariant polytope algorithm without the need of terminating the algorithm.

Nevertheless, these bounds are usually quite rough. 
A simple modification, presented in Remark~\ref{sec_modinvpoly_accuracy},
increases the accuracy of these intermediate bounds 
on the drawback that the exact value of the $\JSR$ becomes incomputable.

\subsection{Parallelization and natural selection of vertices}
A disadvantage of the invariant polytope algorithm~\ref{alg_invpoly} in its current form is 
that the polytope is changed \emph{inside} of the main loop in~\ref{alg_invpoly}~\eqref{alg_invpoly_testinside},  
which implies that in general the norm of $\tilde{A}_j v$ 
has to be computed with respect to a different polytope for each vertex. 
Therefore, the linear programming problem is different for each norm 
and the so-called warm start of linear programming problems cannot be used.
Furthermore, the main loop cannot be parallelised.
We eliminate these two drawbacks and additionally speed up the invariant polytope algorithm in Section~\ref{sec_modinvpoly_computenorm}.

The employed technique also solves a problem arising when the number of matrices in $\mathcal{A}$ is large.
In such cases the invariant polytope algorithm~\ref{alg_invpoly} will stall, 
simply due to the fact, that the number of vertices to test, 
increases in the worst case by a factor of $\#\mathcal{A}$ in each iteration.
E.g., if $\#\mathcal{A}\gtrsim 100$, 
the original invariant polytope algorithm is likely never to reach the third iteration.

\subsection{Estimating the Minkowski norm}
To reduce the number of norms one has to compute in~\ref{alg_invpoly}~\eqref{alg_invpoly_testinside}, 
we use the estimates for the Minkowski norm in Lemma~\ref{thm_estimate_1}.

\section{Modified Gripenberg algorithm}\label{sec_modgrip}

From Inequality~\eqref{equ_bound_jsr} we know that the normalized spectral radius of any matrix product is a lower bound for the $\JSR$. 
Thus, by a clever guess of a matrix product one easily obtains good (maybe sharp) lower bounds for the $\JSR$.  
Our new modified Gripenberg algorithm presented in this section finds in nearly all of our numerical tests an s.m.p..

The modified Gripenberg algorithm~\ref{alg_modgrip} is a modification of the well-known Gripenberg algorithm~\cite{Grip96}, 
one of the first algorithms which gave reasonable estimates for the $\JSR$. 
We briefly describe how it works:
Given some accuracy $0<\delta\leq 1$ we iteratively compute the sets $C_k$, $k\in\NN_0$. 
$C_0:=I$ and $C_{k+1}$ consists of all matrices ${C}\in\mathcal{A}C_k$ with $\|C\|^{1/(k+1)}\geq\delta^{-1} b_-$, 
where 
\begin{equation*}
b_-=\max\{\rho(C)^{1/n}:C\in C_n,\ n=1,\ldots,k\}
\end{equation*}
is the current lower bound for the $\JSR$.
In other words, we sort out matrix products whose averaged norm is less than the current lower  $b_-$ bound of the $\JSR$.
For each $k$ the $\JSR$ lies in the interval $[b_-,\ b_+]$ with 
\begin{equation*}
b_+=\min_{n=1,\ldots k} \max\big\{\|C\|^{1/n}:C\in C_n\big\}.
\end{equation*}
Note that $b_-$ is monotone increasing and $b_+$ is monotone decreasing.
If $\delta<1$ Gripenberg's algorithm terminates~\cite{Grip96}, 
i.e.\ there exists $K\in\NN$ such that $C_K=\emptyset$ and the $\JSR$ is computed up to an accuracy of $\delta$, i.e.\ $b_-/b_+\leq \delta$.
For real-world applications Gripenberg's algorithm works well for $\delta\leq0.95$.
For larger $\delta$ the number of products to compute is usually too large.
Figure~\ref{fig_grip} shows how to estimate the $\JSR$ using Gripenberg's algorithm for a concrete set of matrices.
For some vertex $w=A_j$, $v\in V_k$, $k\in\NN$, we say that $w$ is a \emph{child} of $v$, 
and that $v$ is the \emph{parent} of $w$.

\begin{figure}[tb]
{\small
\begin{forest} for tree={l sep=5}
[,baseline
    [{A\\[-.5ex]{\footnotesize $\norm{1.41}$}\\[-.5ex]{\footnotesize $\rho(1)$}\\[-.5ex]}
    [{AA\\[-.5ex]{\footnotesize $\norm{1.19}$}\\[-.5ex]{\footnotesize $\rho(1)$}\\[-.5ex]} 
    ]        
    [{BA\\[-.5ex]{\footnotesize $\norm{1.41}$}\\[-.5ex]{\footnotesize $\rho(1.41)$}\\[-.5ex]}
        [{{\phantom{ABA}\\[-.5ex]\phantom{\footnotesize $\norm{1.41}$}\\[-.5ex]\phantom{\footnotesize $\rho(1.26)$}\\[-.5ex]}}, no edge
        ]
        [{{\phantom{BBA}\\[-.5ex]\phantom{\footnotesize $\norm{1.47}$}\\[-.5ex]\phantom{\footnotesize $\rho(1.44)$}\\[-.5ex]}}, no edge
        ]
    ]
    ]            
    [{B\\[-.5ex]{\footnotesize $\norm{1.61}$}\\[-.5ex]{\footnotesize $\rho(1)$}\\[-.5ex]}
        [{AB\\[-.5ex]{\footnotesize $\norm{1.49}$}\\[-.5ex]{\footnotesize $\rho(1.41)$}\\[-.5ex]} 
            [{AAB\\[-.5ex]{\footnotesize $\norm{1.31}$}\\[-.5ex]{\footnotesize $\rho(1.26)$}\\[-.5ex]} 
            ]                     
            [{BAB\\[-.5ex]{\footnotesize $\norm{1.47}$}\\[-.5ex]{\footnotesize $\rho(1.44)$}\\[-.5ex]}
            ]        
        ]         
        [{BB\\[-.5ex]{\footnotesize $\norm{1.55}$}\\[-.5ex]{\footnotesize $\rho(1)$}\\[-.5ex]}
            [{ABB\\[-.5ex]{\footnotesize $\norm{1.47}$}\\[-.5ex]{\footnotesize $\rho(1.44)$}\\[-.5ex]} 
            ]            
            [{BBB\\[-.5ex]{\footnotesize $\norm{1.44}$}\\[-.5ex]{\footnotesize $\rho(1)$}\\[-.5ex]}
            ] 
        ]
    ]
]
\end{forest}
\hspace{3em}
\begin{forest} for tree={l sep=5}
[,baseline
    [{{\footnotesize $\phantom{\delta^{-1}}b_-=1\phantom{.00}$}\\[-.5ex]
      {\footnotesize $\phantom{\delta^{-1}}b_+ = 1.61$}\\[-.5ex]
      {\footnotesize $\delta^{-1} b_- = 1.05$}\\[-.5ex]}, no edge
        [{{\footnotesize $\phantom{\delta^{-1}}b_-=1.41$}\\[-.5ex]
          {\footnotesize $\phantom{\delta^{-1}}b_+ = 1.55$ }\\[-.5ex]
          {\footnotesize $\delta^{-1} b_- = 1.48$}\\[-.5ex]}, no edge
            [{{\footnotesize $\phantom{\delta^{-1}}b_-=1.44$}\\[-.5ex]
              {\footnotesize $\phantom{\delta^{-1}}b_+ = 1.47$ }\\[-.5ex]
              {\footnotesize $\delta^{-1} b_- = 1.51$}\\[-.5ex]}, no edge
            ]
        ]
    ]
]
\end{forest}
}
\caption[Tree built up by Gripenberg's algorithm]{%
    Tree built up by Gripenberg's algorithm for the matrices 
    $A=\left[\begin{smallmatrix}0&0\\1&1\end{smallmatrix}\right]$
    and
    $B=\left[\begin{smallmatrix}1&1\\0&1\end{smallmatrix}\right]$  
    with $\delta=0.95$ and using the $2$-norm.
    The computed matrices, their averaged norms and averaged spectral radii are printed.
    Gripenberg's algorithm terminates after the third iteration, since all extant matrices have averaged norm less then $\delta^{-1} b_-$.
    Thus, $\JSR\in[1.44,\, 1.47]$.
    More precise:
    \newline
    {\bf Iteration 1} Gripenberg's algorithm starts computing (averaged) norms and spectral radii of the matrices in the set $C_1=\mathcal{A}\cdot\{I\}=\{A,B\}$;
    $\norm{A}_2 \simeq 1.41$,
    $\norm{B}_2 \simeq 1.61$,
    $\rho(A) = 1$,
    $\rho(B) = 1$.
    Thus, we get the lower and upper bounds
    $b_-=\max\{\rho(A),\rho(B)\}=1$ and
    $b_+=\max\{\norm{A}_2,\norm{B}_2\}\simeq 1.61$ for the $\JSR$.
    The norms of both matrices is larger than $\delta^{-1}b_-$, thus, 
    $C_2=\mathcal{A}\{A,B\}=\{AA,BA,AB,BB\}$.
    \newline
    {\bf Iteration 2}
    Computing all averaged norms and spectral radii from the matrices in the set $C_2$, we obtain
    $b_-
    \simeq 1.41$,
    $b_+
    \simeq \allowbreak 1.55$. 
    Since,
    $\norm{AA}_2^{1/2},\norm{BA}_2^{1/2}<\delta^{-1}b_-$ we define 
    $C_3=\mathcal{A}\{AB,BB\}$.
    \newline
    {\bf Iteration 3} 
    Computing all averaged norms and spectral radii from the matrices in the set $C_3$, we obtain
    $b_-\simeq1.44$, $b_+\simeq1.45$.
    The averaged norms of all matrices in the set $C_3$ is less than $b_-$,
    and thus, $C_4=\emptyset$.
    The algorithm terminates and returns
    $\JSR(\mathcal{A})\simin[1.44,1.47]$.
    Note that, indeed, $\delta\leq 0.98\simeq1.44/1.47$.
}
\Description[Tree built up by Gripenberg's algorithm]{Tree built up by Gripenberg's algorithm}
\label{fig_grip}%
\end{figure}
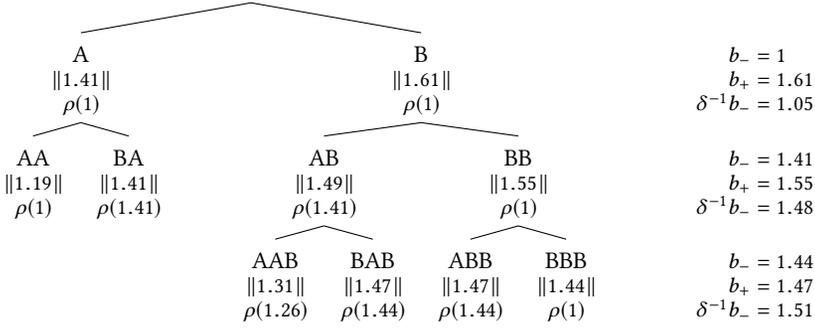

The modified Gripenberg algorithm~\ref{alg_modgrip} uses a different mechanism to sort out matrix products. 
Instead of just dismissing products with norms less than some threshold, 
it furthermore only keeps products with highest and lowest norms, 

\begin{figure}[t]
{\small
\begin{forest} for tree={l sep=5}
[,baseline
	[{A\\[-.5ex]{\footnotesize $\norm{1.41}$}\\[-.5ex]{\footnotesize $\rho(1)$}\\[-.5ex]}
    	[{AA\\[-.5ex]{\footnotesize $\norm{1.19}$}\\[-.5ex]{\footnotesize $\rho(1)$}\\[-.5ex]}
    	]
    	[{BA\\[-.5ex]{\footnotesize $\norm{1.41}$}\\[-.5ex]{\footnotesize $\rho(1.41)$}\\[-.5ex]}
        	[{ABA\\[-.5ex]{\footnotesize $\norm{1.41}$}\\[-.5ex]{\footnotesize $\rho(1.26)$}\\[-.5ex]}
        	]
        	[{BBA\\[-.5ex]{\footnotesize $\norm{1.47}$}\\[-.5ex]{\footnotesize $\rho(1.44)$}\\[-.5ex]}
        	]
    	]
	]
	[{B\\[-.5ex]{\footnotesize $\norm{1.61}$}\\[-.5ex]{\footnotesize $\rho(1)$}\\[-.5ex]}
    	[{AB\\[-.5ex]{\footnotesize $\norm{1.49}$}\\[-.5ex]{\footnotesize $\rho(1.41)$}\\[-.5ex]}
            [{{\phantom{AAB}\\[-.5ex]\phantom{\footnotesize $\norm{1.31}$}\\[-.5ex]\phantom{\footnotesize $\rho(1.26)$}\\[-.5ex]}} ,no edge
            ]
            [{{\phantom{BAB}\\[-.5ex]\phantom{\footnotesize $\norm{1.47}$}\\[-.5ex]\phantom{\footnotesize $\rho(1.44)$}\\[-.5ex]}},no edge
            ]
    	]
    	[{BB\\[-.5ex]{\footnotesize $\norm{1.55}$}\\[-.5ex]{\footnotesize $\rho(1)$}\\[-.5ex]}
            [{ABB\\[-.5ex]{\footnotesize $\norm{1.47}$}\\[-.5ex]{\footnotesize $\rho(1.44)$}\\[-.5ex]}
            ]
            [{BBB\\[-.5ex]{\footnotesize $\norm{1.44}$}\\[-.5ex]{\footnotesize $\rho(1)$}\\[-.5ex]}
            ]
    	]
	]
]
\end{forest}
\hspace{3em}
\begin{forest} for tree={l sep=5}
	[,baseline
	[{{\footnotesize $\rho_c=1\phantom{.00}$}\\[-.5ex]~\\[-.5ex]~\\[-.5ex]}, no edge
	[{{\footnotesize $\rho_c\simeq1.41$}\\[-.5ex]~\\[-.5ex]~\\[-.5ex]}, no edge
	[{{\footnotesize $\rho_c\simeq1.44$}\\[-.5ex]~\\[-.5ex]~\\[-.5ex]}, no edge
	]
	]
	]
	]
\end{forest}
}
\caption[Tree built up by the modified Gripenberg algorithm]{%
	Tree built up by the modified Gripenberg algorithm for the matrices 
	$A=\left[\begin{smallmatrix}0&0\\1&1\end{smallmatrix}\right]$
	and
	$B=\left[\begin{smallmatrix}1&1\\0&1\end{smallmatrix}\right]$     
	with $N=1$, $D=3$ and using the $2$-norm.
	The computed matrices, their averaged norms and averaged spectral radii are printed.
	The modified Gripenberg algorithm returns that $\JSR(\{A,B\})\gtrsim 1.44$.    
	More precise:
	\newline
	{\bf Iteration 1}
	We set $\mathcal{M}_0=\{I\}$, $\rho_c=0$.
	The modified Gripenberg algorithm starts computing averaged norms and spectral radii of the matrices in the set $\mathcal{M}_1=\mathcal{A}\mathcal{M}_0=\{A,B\}$;
	$\norm{A}_2\simeq \allowbreak 1.41$,
	$\norm{B}_2\simeq \allowbreak 1.61$,
	$\rho(A) = \allowbreak 1$,
	$\rho(B) = \allowbreak 1$.
	Thus, $\rho_c=\max\{0, \rho(A), \rho(B)\}=1$.
	Since $\norm{A}_2,\norm{B}_2\geq \rho_c$ no matrix products are removed,
	and $\mathcal{M}_1=\{A,B\}$.
	After sorting with respect to the (averaged) norms we obtain $\mathcal{M}_1=\{B,A\}$.
 Since $N=1$ we keep the first and last element of the sorted set,
	thus, $\mathcal{M}_1=\{B,A\}$.
	\newline
	{\bf Iteration 2}
	Computing the averaged norms and spectral radii in the set
	$\mathcal{M}_2=\mathcal{A}\mathcal{M}_1$.
    we obtain $\rho_c\simeq 1.41$.
	Since $\norm{AA}_2^{1/2} < \rho_c$ we set $\mathcal{M}_2=\{BA,AB,BB\}$.
	After sorting with respect to the averaged norms we obtain 
	$\mathcal{M}_2=\{BB,AB,BA\}$
	and since $N=1$ we keep the first and last element,
	thus, $\mathcal{M}_2=\{BB,BA\}$.
	\newline
	{\bf Iteration 3}
	Computing the averaged norms and spectral radii in the set
	$\mathcal{M}_3=\mathcal{A}\mathcal{M}_2$.
	we obtain 
    $\rho_c\simeq 1.44$.
	Since $D=3$ we stop in this iteration
	and return
	$\JSR(\mathcal{A})\gtrsim 1.44$
	and the set of s.m.p.-candidates $C=\{BBA\}$.
}
\Description[Tree built up by the modified Gripenberg algorithm]{Tree built up by the modified Gripenberg algorithm}
\label{fig_modgrip}
\end{figure}
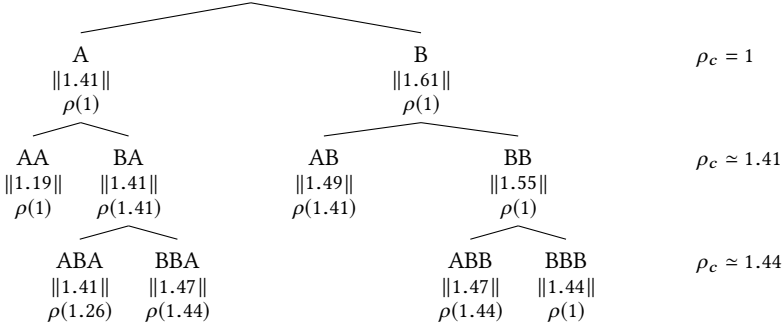

\begin{algorithm}[modified Gripenberg algorithm]\label{alg_modgrip}
\begingroup\allowdisplaybreaks
\begin{flalign*}
&\mathbf{Input: }\\
&\text{Set of square matrices }\mathcal{A}=\{A_j:j=1,\ldots,J\}\subseteq\RR^{s\times s}\\
&\text{Number of products kept in each step } N\in\NN\\
&\text{Maximal length of products } D\in\NN\\
&\mathbf{Output: }\\
&\text{S.m.p.-candidates $\mathcal{C}$}\\
&\text{Lower bound $\rho_c$ for $\JSR(\mathcal{A})$}\\[-0.6\baselineskip]
\cline{1-2}
&\mathbf{Initialization: }\\
&\text{Start with the product of length 0, }\mathcal{M}_0:=\{I\}, \text{ where } I \text{ is the identity matrix}\\
&\text{Set current lower bound for $\JSR$, } \rho_c:=0\\
&\mathbf{Algorithm: }\\
&\mathbf{for}\ d=1,\ldots,D\\
&\qquad \text{Compute all possible new products }\mathcal{M}_d:=\mathcal{A}\mathcal{M}_{d-1}\\
    &\qquad \text{Update lower bound } \rho_c:=\max\{\rho_c,\ \rho(M_d)^{1/d}:M_d\in\mathcal{M}_d\}\\
    &\qquad \text{Remove products whose norms are less than $\rho_c$, }
    \mathcal{M}_d:=\{M_d\in\mathcal{M}_d:\|M_d\|^{1/d}\geq \rho_c\}\\
    	&\qquad \numberthis\label{alg_modgrip_selectmatrix} \begin{aligned}
        &\text{Keep only products with highest and lowest norms:}\\
        &\quad \text{Sort } \mathcal{M}_d\ \text{w.r.t}\ \|M_d\| \text{ and sort out matrices with indices } N+1,\ldots, \#\mathcal{M}_d-N-1\\
        &\quad \text{Thus }\mathcal{M}_d=\{M_1,\ldots,M_N, M_{\#\mathcal{M}_d-N},\ldots,M_{\#\mathcal{M}_d} :M_i\in\mathcal{M}_d\}
    \end{aligned}\\
&\mathbf{Post\ processing:}\\
&\text{Choose products } \mathcal{C}=\{M_{d_i}\in\mathcal{M}_d : \rho(M_{d_i})^{1/d}=\rho_c,\ d=1,\ldots,D\}\\
&\text{Remove cyclic permutations and powers of  products from $\mathcal{C}$}\\
&\mathbf{return~} \mathcal{C},\ \rho_c
\end{flalign*}%
\endgroup
\end{algorithm}

\begin{theorem}\label{thm_modgrip}
The modified Gripenberg algorithm~\ref{alg_modgrip} has linear complexity in the 
number $J=\#\mathcal{A}$ of matrices , 
in the number $N\in\NN$ of kept products in each level  and 
in the maximal length  $D\in\NN$ of the products. 
\end{theorem}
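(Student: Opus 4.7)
The plan is to show that the total work of Algorithm~\ref{alg_modgrip} is $O(NJD)$ (treating the matrix dimension $s$ as a constant absorbed into the hidden constant), by first bounding the cardinality of $\mathcal{M}_d$ throughout the run and then itemizing the cost of each operation inside the outer $\mathbf{for}$-loop.

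First I would establish the invariant that $\#\mathcal{M}_d \le 2N$ for every $d\ge 1$ at the moment the body of the loop for level $d$ finishes. This is an immediate consequence of step~\eqref{alg_modgrip_selectmatrix}: after sorting by norm and discarding the middle indices $N+1,\dots,\#\mathcal{M}_d-N-1$, at most the $N$ largest and the $N$ smallest entries survive. Combined with the trivial fact $\#\mathcal{M}_0 = 1$, this gives $\#(\mathcal{A}\mathcal{M}_{d-1}) \le J\cdot \max(1,2N) \le 2NJ$ at the beginning of level $d$, so every subsequent operation inside the loop body at level $d$ handles at most $2NJ$ matrices.

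Next I would tally the work done at a single level $d$. Four operations are performed: (i) forming the products $\mathcal{A}\mathcal{M}_{d-1}$ costs at most $2NJ$ matrix multiplications; (ii) computing $\rho(M_d)$ and $\|M_d\|$ for each $M_d\in\mathcal{M}_d$ costs at most $2NJ$ eigenvalue and norm evaluations; (iii) the thresholding $\|M_d\|^{1/d}\ge\rho_c$ is a linear scan of $2NJ$ entries; and (iv) the selection of the $N$ largest and $N$ smallest norms in step~\eqref{alg_modgrip_selectmatrix}. Each of (i)--(iii) runs in time $O(NJ)$ with a constant depending only on $s$, so the per-level cost is $O(NJ)$. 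Summing over $d=1,\dots,D$ yields $O(NJD)$, linear in each of the three parameters, and the final post-processing (removing cyclic permutations and powers from $\mathcal{C}$) adds only a lower-order term.

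The main subtlety is step~(iv): the pseudocode literally says \emph{sort} $\mathcal{M}_d$, which naively costs $O(NJ\log(NJ))$ and would spoil linearity in $N$ and $J$. I would address this by observing that one does not actually need a full sort; the $N$-th largest and $N$-th smallest elements can be identified in time $O(NJ)$ by any linear-time selection routine (for instance, quickselect or median-of-medians), after which the two surviving blocks of at most $N$ elements each are extracted in a single linear pass. With this implementation choice, all four steps are genuinely linear in $NJ$, and the claimed linear complexity in $J$, $N$, and $D$ follows.
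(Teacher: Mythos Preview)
Your proof is correct and follows the same per-level accounting as the paper's one-line argument, which simply observes that each of the $D$ iterations computes at most $2NJ$ norms and spectral radii. Your treatment is in fact more careful than the paper's: the paper does not address the sorting in step~\eqref{alg_modgrip_selectmatrix} at all, and your observation that a literal sort would introduce a $\log(NJ)$ factor---remedied by linear-time selection---is a genuine refinement that the paper glosses over.
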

\begin{proof}
	In every iteration, in total $D$ many, the modified Gripenberg algorithm computes at most $2\cdot N\cdot J$ norms and spectral radii.
\end{proof}

\begin{remark}
The modified Gripenberg algorithm~\ref{alg_modgrip} in the given form 
only returns lower bounds for the $\JSR$.
If one keeps track which products are dismissed, then it is possible to give also upper bounds for the $\JSR$.
Note that the modified Gripenberg algorithm with parameters 
$N=D=\infty$ is exactly Gripenberg's algorithm with accuracy $\delta=1$,
\end{remark}

\begin{remark}\label{rem_alg_randmodgrip}
Clearly one can pursue other selection strategies in step~\eqref{alg_modgrip_selectmatrix}.
The straightforward choice of taking the $2\cdot N$ products with highest normalized norm performs very badly.
Taking an arbitrary subset of $\mathcal{M}_d$ of size $2\cdot N$ in step~\ref{alg_modgrip}~\eqref{alg_modgrip_selectmatrix} 
performs mostly similarly to the modified Gripenberg algorithm~\ref{alg_modgrip}, 
but in some cases worse, see Table~\ref{table_random_modgrip} where we call it \emph{random Gripenberg algorithm}.
Furthermore, the modified Gripenberg algorithm~\ref{alg_modgrip} in the given form is deterministic, 
so we prefer it over a non-deterministic version.
\end{remark}

\begin{remark}
Our new modified invariant polytope algorithm, presented in Section~\ref{sec_modinvpoly}, 
can also be used to search for s.m.p.-candidates.
Thus, we give the numerical examples showing the performance of the modified Gripenberg algorithm~\ref{alg_modgrip}
only after Section~\ref{sec_modinvpoly}.
\end{remark}

Figure~\ref{fig_modgrip} shows how to find lower bounds for the $\JSR$ using the 
modified Gripenberg algorithm for a concrete set of matrices.
You may want to compare this Figure with Figure~\ref{fig_grip}.

\section{Modified invariant polytope algorithm}\label{sec_modinvpoly}

In this section, we present the modifications to the invariant polytope algorithm~\ref{alg_invpoly}.

\begin{algorithm}[Modified invariant polytope algorithm] \label{alg_modinvpoly}
Lines with numbers are subroutines,
described in detail in Sections~\ref{sec_modinvpoly_noinvsubspace}--\ref{sec_modinvpoly_testrho}.

\begingroup\allowdisplaybreaks
\begin{flalign*}
&\mathbf{Input: }\\
&\text{Set of irreducible square matrices }\mathcal{A}=\{A_j:j=1,\ldots,J\}\subseteq\RR^{s\times s}
\numberthis\label{alg_modinvpoly_noinvsubspace}
\\
&\text{Accuracy } 0<\delta\leq 1\quad (\delta\simeq 1)\\
&\text{Accuracy } 0<\epsilon<1  \text{ for computing the norms $N(v)$ in~\eqref{alg_modinvpoly_computenorm} }(\epsilon\simeq 0) \\
&\mathbf{Output: }\\
&\text{Exact value $\rho_c$ or bound $[\rho_c,\ b\cdot \rho_c]$ for $\JSR(\mathcal{A})$} \\
&\text{Invariant polytope~} \coast V\\
&\text{Spectral maximizing products~}\Pi_r&\\[-0.6\baselineskip]
\cline{1-2}
&\mathbf{Initialization: }\\
&
\text{Search for s.m.p.-candidates and nearly-s.m.p.s } \Pi_r=A_{j_{r_{l_r}}}\cdots A_{j_{r_1}},\ r=1,\ldots,R
\numberthis\label{alg_modinvpoly_modgrip} 
\\
&\text{Set } \rho_r:=\rho(\Pi_r)^{1/l_r},\ \rho_c:=\max \rho_r,\ \tilde{\mathcal{A}}:=\delta\rho_c^{-1}\mathcal{A}
\numberthis\label{alg_modinvpoly_accuracy}
\\
&\text{Compute the leading eigenvectors $v_r$ of $\tilde{\Pi}_r$}
\\
&\text{Compute the root vectors } v_r^{(i)}:=(\rho_c/\delta\rho_r)^i\tilde{A}_{j_{r_i}}\ldots \tilde{A}_{j_{r_1}}v_r,\ i=0,\ldots,l_r-1
\\
&
\text{Compute the extra-vertices } v_{R+1},\ldots,v_S\in\RR^s
\numberthis\label{alg_modinvpoly_extravertex}
 \\
&\text{Compute the balancing factors } \alpha_1,\ldots,\alpha_S\in\RR 
\numberthis\label{alg_modinvpoly_balancing} 
\\
&\text{Set } \mathcal{H}:=\{\alpha_1 v_1^{(0)},\alpha_1 v_1^{(1)},\alpha_1 v_1^{(2)},\ldots,\alpha_R v_R^{(l_R-1)}\},\ %
V_0:=\mathcal{H}\cup\{\alpha_{R+1} v_{R+1},\ldots,\alpha_S v_S\}\\
&\text{Set } N(v):=\infty \text{ for all } v\in V_0,\quad b_0:=\infty,\quad k:=0\\
&\mathbf{Main~Loop: }\\
&\mathbf{while~} \tilde{\mathcal{A}}V_k\setminus\mathcal{V}_k\not\subseteq (1-\epsilon)\coast  V_k\\
    &\qquad
    \text{Select new children } E_{k+1}\subseteq\tilde{\mathcal{A}}V_k\setminus\mathcal{V}_k 	
   	\text{ based on norm estimates}
    \numberthis\label{alg_modinvpoly_selectnewvertex} 
    \\
	&\qquad 
    \text{Choose subset of vertices } W_k\subseteq V_k		
    \numberthis\label{alg_modinvpoly_subset} 
    \\
	&\qquad 
    \text{Compute/classify norm~} N(v):=\|v\|_{\coast W_k} \text{ for all } v\in E_{k+1}
	\numberthis\label{alg_modinvpoly_computenorm}
    \\
	&\qquad V_{k+1}:=V_k\cup \{v\in E_{k+1}:N(v)>1-\epsilon\}
	\numberthis\label{alg_modinvpoly_addvertex} 
    \\
%
    &\qquad 
    b_{k+1}:=\min\big\{b_k,\ \max\{1,\ N(v)(1-\epsilon)^{-1}:v\in V_{k+1}\wedge \tilde{\mathcal{A}}v\nsubseteq V_{k+1}\}\}
\\
	&\qquad 
    \text{Test spectral radii based and eigenplane based stopping critera}
    \numberthis\label{alg_modinvpoly_testrho}
    \\
   	&\qquad \mathbf{print~} \JSR\in[\rho_c,\ \delta^{-1}\cdot b_{k+1}\cdot \rho_c]\\
	&\qquad k:=k+1\\		
&\mathbf{return~} V,\ \{\Pi_r\}_r,\ \rho_c
\end{flalign*}%
\endgroup%
\end{algorithm}


\begin{theorem}\label{thm_termination}
Let $\mathcal{A}=\{A_j:j=1,\ldots,J\}\subseteq\RR^{s\times s}$ be a finite set of square matrices.

	$(i)$ For $\delta=1$, the modified invariant polytope algorithm~\ref{alg_modinvpoly} terminates if and only if 
    the original invariant polytope algorithm~\ref{alg_invpoly} terminates, 
    i.e.\ $\Pi_1,\ldots,\Pi_R$ are dominant s.m.p.s and each s.m.p.\ possesses only one simple leading eigenvalue\footnote{In~\cite{GP13} such eigenvalues are called \emph{unique}.}.
    
	$(ii)$ For $0<\delta<1$ the modified invariant polytope algorithm~\ref{alg_modinvpoly} terminates if  
    $~\JSR(\mathcal{A})<\delta^{-1}\cdot \rho_c$.
    
	$(iii)$ Moreover, for any iteration $k\in\NN_0$, $\JSR(\mathcal{A})\in[\rho_c,\ \delta^{-1}\cdot b_{k+1}\cdot\rho_c]$, 
    where $\rho_c$ and and $b_{k+1}$ are defined in Algorithm~\ref{alg_modinvpoly}.
\end{theorem}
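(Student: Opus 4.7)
My plan is to prove the three parts in the order (iii), (ii), (i): the bounds in (iii) immediately yield the termination criterion of (ii) via a contraction argument, and (i) reduces to the Guglielmi--Protasov characterization of termination for algorithm~\ref{alg_invpoly} after verifying that the modifications preserve the termination invariant.

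For (iii) I would first establish the lower bound $\rho_c \leq \JSR(\mathcal{A})$, which is immediate from~\eqref{equ_bound_jsr} since $\rho_c = \max_r \rho(\Pi_r)^{1/l(r)}$ is the normalized spectral radius of one of the products discovered by the algorithm. For the upper bound I pass to the rescaled family $\tilde{\mathcal{A}} = \delta\rho_c^{-1}\mathcal{A}$, for which $\JSR(\tilde{\mathcal{A}}) = \delta\rho_c^{-1}\JSR(\mathcal{A})$, and aim to show $\JSR(\tilde{\mathcal{A}}) \leq b_{k+1}$. Writing $P_{k+1} := \coast V_{k+1}$, linearity and convexity of the Minkowski norm give $\|\tilde{A}_j\|_{P_{k+1}} = \max_{v \in V_{k+1}} \|\tilde{A}_j v\|_{P_{k+1}}$, so it suffices to bound each $\|\tilde{A}_j v\|_{P_{k+1}}$ by $b_{k+1}$. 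I would case-split on whether $\tilde{A}_j v$ is already in $V_{k+1}$ (in which case the norm is at most $1$), was previously tested and absorbed (so its norm against some $\coast W_{k'} \subseteq P_{k+1}$ is at most $1 - \epsilon$ by monotonicity of $\coast$), or has not yet been tested --- this last case corresponds exactly to the vertices $v$ with $\tilde{\mathcal{A}} v \not\subseteq V_{k+1}$, which contribute the term $N(v)(1-\epsilon)^{-1}$ to the maximum defining $b_{k+1}$. The outer $\min\{b_k, \cdot\}$ preserves the sharpest bound obtained in earlier iterations.

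For (ii), the hypothesis $\JSR(\mathcal{A}) < \delta^{-1}\rho_c$ rewrites as $\JSR(\tilde{\mathcal{A}}) < 1$, so by~\eqref{equ_jsr} there exist $r \in (0, 1)$ and $C > 0$ with $\|\tilde{A}_{j_n}\cdots\tilde{A}_{j_1}\| \leq Cr^n$ uniformly over all choices of product, in any fixed operator norm. Irreducibility of $\mathcal{A}$ together with the cyclic-root and extra-vertex construction guarantees that $\coast V_0$ has non-empty interior, so the polytope norm $\|\cdot\|_{\coast V_0}$ is equivalent to the Euclidean norm. Consequently every vertex generated at depth $n$ large enough satisfies $\|\cdot\|_{\coast V_k} < 1 - \epsilon$ and is absorbed rather than added to the polytope, forcing the main loop to exit after finitely many iterations.

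For (i), at $\delta = 1$ the original algorithm~\ref{alg_invpoly} is known~\cite{GP13,GP16} to terminate precisely when the s.m.p.-candidates are dominant with simple leading eigenvalues. I would verify the same for algorithm~\ref{alg_modinvpoly} by checking that each modification preserves the termination invariant $\tilde{\mathcal{A}} V \subseteq (1-\epsilon)\coast V$: the vertex-selection in step~\ref{alg_modinvpoly_selectnewvertex}, the subset $W_k \subseteq V_k$ used in the norm computation~\ref{alg_modinvpoly_computenorm}, the extra-vertex initialisation~\ref{alg_modinvpoly_extravertex}, the balancing in~\ref{alg_modinvpoly_balancing}, and the restart mechanism in~\ref{alg_modinvpoly_testrho}. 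The main obstacle will be the subsampling $W_k \subsetneq V_k$: since $\coast W_k \subseteq \coast V_k$, the norm computed against $W_k$ overestimates the true polytope norm, risking that vertices are added which the original algorithm would absorb. Lemma~\ref{thm_estimate_1} controls this overestimation and forces it to vanish as $W_k \to V_k$, so every vertex absorbed by the true invariant polytope is eventually absorbed by the modified algorithm as well, yielding the claimed equivalence of termination.
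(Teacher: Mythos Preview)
Your arguments for (ii) and (iii) follow the paper's own proof closely: the contraction argument for (ii) and the case-split on $\tilde{A}_j v$ for (iii) are exactly what the paper does. One minor point: in your case (c) for (iii) you claim that untested children are controlled by $N(v)(1-\epsilon)^{-1}$, where $N(v)$ is the \emph{parent's} computed norm, but you do not explain how the parent's norm bounds the child's norm against $\coast V_{k+1}$; this step needs the subset restriction~\eqref{equ_Wk} on $W_k$ and is somewhat delicate (the paper's own proof is terse here as well).

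Your approach to (i), however, diverges from the paper and contains a genuine gap. You propose to check that each modification preserves the termination invariant and to handle the overestimation from $W_k \subsetneq V_k$ via Lemma~\ref{thm_estimate_1} together with a limit ``$W_k \to V_k$''. But Lemma~\ref{thm_estimate_1} is a collection of cheap a-priori bounds on Minkowski norms used only to \emph{skip} LP calls; it says nothing about $W_k$ approaching $V_k$, and the algorithm never forces $W_k = V_k$. The paper instead uses a short sandwich argument based on condition~\eqref{equ_modinvpoly_selectnewvertex}: since the modified algorithm only ever adds vertices from $\bigcup_n \tilde{\mathcal{A}}^n V_0$, one has $\coast V_K^{\mathrm{mod}} \subseteq \coast \bigcup_{k=0}^K \tilde{\mathcal{A}}^k V_0$; if the original algorithm terminates at depth $N$ with invariant polytope $\coast V_N^{\mathrm{orig}} = \coast \bigcup_{n=0}^N \tilde{\mathcal{A}}^n V_0$, then invariance gives $\coast \bigcup_{k=0}^K \tilde{\mathcal{A}}^k V_0 = \coast V_N^{\mathrm{orig}}$ for all $K \geq N$, and condition~\eqref{equ_modinvpoly_selectnewvertex} (every vertex is eventually selected) supplies the reverse inclusion $\coast V_N^{\mathrm{orig}} \subseteq \coast V_K^{\mathrm{mod}}$ for some $K$, so the two polytopes coincide and the modified algorithm stops. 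The point you miss is that the ``extra'' vertices added due to overestimation are harmless---they already lie inside the original invariant polytope---so there is nothing to control and Lemma~\ref{thm_estimate_1} is not needed.
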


Before presenting the proof of Theorem~\ref{thm_termination} in Section~\ref{proof_termination}, 
we describe all modifications and extensions to the original invariant polytope algorithm~\ref{alg_invpoly}. 
These are numbered~\eqref{alg_modinvpoly_noinvsubspace}--\eqref{alg_modinvpoly_testrho} 
in the modified invariant polytope algorithm~\ref{alg_modinvpoly}.
All heuristic constants which influence the behaviour of the algorithm can be changed by passing a 
name-value pair in the function call of our implementation, see the documentation for more information.

\subsection{Irreducibility of input matrices~\texorpdfstring{\eqref{alg_modinvpoly_noinvsubspace}}{}}
\label{sec_modinvpoly_noinvsubspace}
The set of matrices $\mathcal{A}$ should be irreducible, 
i.e.\ the matrices in the set $\mathcal{A}$ should not have a trivial common invariant subspace,
because otherwise (both the modified~\ref{alg_modinvpoly} and) the invariant polytope algorithm~\ref{alg_invpoly} may not be able to terminate. 
If the matrices are reducible, then there exists a basis in which all of the matrices $A_j$ have block upper triangular form. 
The $\JSR$ of the matrices then equals to the maximum of the $\JSR$ of the diagonal blocks.
In our implementation
we therefore automatically search for non-trivial common invariant subspaces prior to starting the modified invariant polytope algorithm.
Here we make use of the functions~\texttt{permTriangul} and~\texttt{jointTriangul} from~\cite{Jung2014}, 
as well as a new method \texttt{invariantsubspace}
which searches for non-trivial common invariant difference subspaces as described in~\cite{CP17}.

\subsection{Search for s.m.p.-candidates~\texorpdfstring{\eqref{alg_modinvpoly_modgrip}}{}}
\label{sec_modinvpoly_modgrip}
We use the modified Gripenberg algorithm~\ref{alg_modgrip} to search for s.m.p.-candidates and nearly-s.m.p.s.
Every product, which is shorter than the s.m.p.-candidate and having normalized spectral radius greater or equal to $\tau\cdot\rho_c$ is considered to be a nearly-s.m.p.. 
In our implementation we use a heuristic default value of $\tau=0.9999$
and use the Matlab function \texttt{eig} to compute the leading eigenvalues. 
This may not be the fastest available procedure, 
but it is fast enough in comparison to the time the main loop needs to terminate.

\subsection{Approximate computation~\texorpdfstring{\eqref{alg_modinvpoly_accuracy}}{}}
\label{sec_modinvpoly_accuracy}
If we multiply the set of matrices $\tilde{\mathcal{A}}$ by a factor $0<\delta<1$, 
the modified invariant polytope algorithm~\ref{alg_modinvpoly} cannot return exact values for the $\JSR$ anymore, 
but only up to a relative accuracy of $\delta$. 
Indeed, if the modified invariant polytope algorithm~\ref{alg_modinvpoly} terminates, then 
$\|\tilde{A}_j v\|_{\coast V}\leq 1 \Leftrightarrow
\|A_j v\|_{\coast V}\leq\delta^{-1}\cdot\rho_c\Leftrightarrow
\JSR(\mathcal{A})\leq\delta^{-1}\cdot\rho_c$.
There are cases where this procedure is of significance.

\paragraph{(a)}
 If the dimension $s$ of matrices is large, 
 (both the modified~\ref{alg_modinvpoly} and)
 the invariant polytope algorithm~\ref{alg_invpoly}, will probably not terminate anyway, 
 and thus only give bounds for the $\JSR$. 
 A factor $\delta\simeq 0.97$ will speed up the computation tremendously and 
 the returned bounds from the modified invariant polytope algorithm are mostly better 
 (at least in our numerical examples) than for \mbox{$\delta=1$}.
 The value $0.97$ is based on numerical experiments. 
 An optimal value for $\delta$ can probably be determined using the spectral gap at $1$, 
 but no theoretical investigations nor numerical experiments in that direction have been taken so far.
 
\paragraph{(b)}
 If the s.m.p.s are not dominant, or there is an infinite number of dominant s.m.p.s, 
 or $\mathcal{A}$ is not irreducible,
 the modified invariant polytope algorithm~\ref{alg_modinvpoly} will not terminate. 
 In these cases, choosing $\delta\simeq1-10^{-9}$ ensures that the modified invariant polytope algorithm~\ref{alg_modinvpoly} terminates and 
 the obtained bounds will be nearly the same as when $\delta=1$.
 Note that these cases are mostly non-generic, 
 except for matrix families where this property is known to hold a priori, 
 for example certain matrix sets occurring in subdivision.

\paragraph{(c)}
 If one is interested only whether $\JSR(\mathcal{A})< B$ for some $B>0$, one can choose $1>\delta>B^{-1}\rho_c$ and  the modified invariant polytope algorithm~\ref{alg_modinvpoly} will terminate much faster.

\subsection{Adding extra-vertices automatically~\texorpdfstring{\eqref{alg_modinvpoly_extravertex}}{}}
\label{sec_modinvpoly_extravertex}
The aim of this step in the algorithm is, to compute vertices 
$E=\{v_{R+1},\allowbreak\ldots,\allowbreak v_S\}$
such that the polytope $\coast (\mathcal{H}\cup E)$, $\mathcal{H}=[v_1^{(0)},\allowbreak v_1^{(1)},\allowbreak \ldots,\allowbreak v_R^{(l_R-1)}]$, has non-empty interior and is elongated in all coordinate directions.
The procedure is different in cases $(P)$ and $(R)$.

For case $(R)$, 
given some threshold $T>0$, 
we compute the singular value decomposition of 
$\mathcal{H}=[v_1^{(0)},\allowbreak v_1^{(1)},\allowbreak \ldots,\allowbreak v_R^{(l_R-1)}]$.
and take all singular vectors (which thus become \emph{extra-vertices}) 
$E=\{v_{R+1},\allowbreak\ldots,\allowbreak v_S\}$ corresponding to singular values
which are in modulus less than $T$.
Note that the singular vectors form an orthonormal system
and that the singular vectors corresponding to small singular values are exactly the directions in which the polytope $\coast \mathcal{H}$ has small or even no elongation. 
In particular, the polytope $\coast (\mathcal{H}\cup E)$ has always non-empty interior.

For case $(P)$, $e_n\in E$ whenever $v_n\leq T$ for all $v\in \mathcal{H}$, 
where $e_n$ is the $n^{th}$ unit vector of $\RR^s$,

In our implementation we use a heuristic value of $T\simeq 0.1$ for both cases.

\subsection{Balancing of cyclic trees~\texorpdfstring{\eqref{alg_modinvpoly_balancing}}{}}
\label{sec_modinvpoly_balancing}
As already noted, the existence of multiple cyclic roots makes it necessary to 
balance the sizes of the cyclic roots to each other in order that the invariant polytope algorithm can terminate.
The balancing procedure uses the 
\emph{dual leading eigenvectors} $v_r^*$, $r=1,\ldots,R$.
More precisely,
for the s.m.p.-candidate $\tilde{\Pi}_r$ define $\tilde{\Pi}_r^*v_r^*=v_r^*$ with 
$\langle v_r^{(0)},v_r^*\rangle=1$,
where $\Pi_r^*$ is the conjugate transpose of $\Pi_r$ and 
$\langle\,\cdot\,,\cdot\,\rangle$ is the standard inner product~\cite[Section 2.3]{GP16}.

If $\delta<1$ no balancing is necessary by Theorem~\ref{thm_termination}. 
If $\delta=1$ we define for $h\in\NN$
\begin{align*}
\left\{
\begin{aligned}
q_{i,j}&=\sup_{z\in\tilde{\mathcal{A}}^h\{(\rho_c/\rho_r)^iv_i^{(0)},\ldots,v_i^{(l_i-1)}\}} |\langle v_j^*,z\rangle|, &i=1,\ldots,R\phantom{+1}\\
q_{i,j}&=\sup_{z\in\tilde{\mathcal{A}}^hv_i} |\langle v_j^*,z\rangle|, &i=R+1,\ldots,S
\end{aligned}
\right.
\quad,\ j=1,\ldots,R.
\end{align*}
The factor $(\rho_c/\rho_r)^i$ ensures that all vertices of the cyclic root of nearly-s.m.p.s 
get the same weight in the computation. 
If $v_i$ is the leading eigenvector of an s.m.p.-candidate, we have $\rho_c/\rho_r=1$.
Now one has to find numbers $\alpha_1,\ldots,\alpha_S>0$ such that
\begin{equation*}
\left\{
\begin{array}{rcll}
\alpha_i q_{i,j}&<&\alpha_j  &\text{whenever $v_i$ is the leading eigenvector of an s.m.p.-candidate}\\
\alpha_i q_{i,j}&<&1         &\text{otherwise}
\end{array}
\right.
\end{equation*}
and multiply all vertices $v_i^{(j)}$, $i=1,\ldots,R$, $j(i)=0,\ldots,l_i-1$, 
and extra-vertices $v_i$, $i=R+1,\ldots,S$, 
from the root $\mathcal{H}$ with the corresponding balancing factor $\alpha_i$. 
In our implementation we distinguish between extra-vertices and vertices from nearly-s.m.p.s., 
precisely we solve the following system
\begin{equation*}
\left\{
\begin{array}{rcll}
\alpha_i q_{i,j}&<&\alpha_j             &\text{whenever $v_i$ is the leading eigenvector of an s.m.p.-candidate}\\
\alpha_i q_{i,j}&=&B_{\text{nearly}}\cdot \rho_i     &\text{whenever $v_i$ is the leading eigenvector of a nearly-s.m.p.}\\
\alpha_i q_{i,j}&=&B_{\text{extra}}                &\text{whenever $v_i$ is an extra-vertex}
\end{array}
\right.,
\end{equation*}
where $B_{\text{nearly}}=0.999$ and $B_{\text{extra}}=0.01$ are based on numerical experiments.
\cite[Theorem~3.3]{GP16} ensures that the modified invariant polytope algorithm~\ref{alg_modinvpoly} 
terminates when started with both the balanced s.m.p.-candidates, 
nearly-s.m.p.s and extra-vertices if and only if 
it terminates when started solely with the balanced s.m.p.-candidates.

\noindent
It was assumed (personal communication), at least for dimension $s=1$, 
that the balancing factors for transition matrices occurring in subdivision theory\footnote{
Subdivision schemes are computational means for generating finer and finer meshes in $\RR^s$, 
usually in dimension $s=1,2,3$. At each step of the subdivision recursion, 
the topology of the finer mesh is inherited from the coarser mesh and the coordinates 
$c^{(n+1)}$ of the finer vertices are computed by local averages of the coarser ones $c^{(n)}$ by 
$c^{(n+1)}=Sc^{(n)}=\sum_{\alpha\in\ZZ^s} a(\cdot-M\alpha)c^{(n)}(\alpha)$.
See~\cite{CM18} for a more thorough explanation.
} are always equal to $1$.
While it is not hard to find counterexamples in dimensions $s>1$, 
the claim is also not valid in the univariate case, as Example~\ref{ex_balancing_trans} shows.
Readers unfamiliar with subdivision schemes may skip Example~\ref{ex_balancing_trans}.
\begin{example}\label{ex_balancing_trans}
Let $S$ be the univariate subdivision scheme defined by the mask $a$ and the dilation matrix $M$ given by
{
\begin{equation*}
a=
\frac{1}{12}
[\!\!\begin{array}{ccccccccccc}
3&3&4&3&3&4&3&3&4&3&3
\end{array}\!\!]^T
,\quad
M=
-3
.
\end{equation*}
}

The basic limit function can be seen in Figure~\ref{fig_balancing}.
Taking the digit set $D=\{-2,\ -1,\ 0\}=M[0, 1)\cap\ZZ$,  we construct the set
$\Omega_C=\{-4,\ -3,\ -2,\ -1,\ 0,\ 1\}$ (using~\cite[Lemma~3.8]{CM18}) and the corresponding transition matrices 
$T_d=\big[a(\alpha-M\beta)\big]_{\alpha,\beta\in\Omega_C}$, $d\in D$. 
The restriction of the transition matrices to the space $V$ of first order differences with basis
{\small
\begin{gather*}
\left[\!\begin{array}{rrrrr}
     1  &   0 &    0  &   0 &    0\\
    -1  &   1 &    0  &   0 &    0\\
     0  &  -1 &    1  &   0 &    0\\
     0  &   0 &   -1  &   1 &    0\\
     0  &   0 &    0  &  -1 &    1\\
     0  &   0 &    0  &   0 &   -1\\
\end{array}\right]
\end{gather*}
}
yields the set of matrices $\mathcal{T}|_V=\{T_{-2}|_V,\ T_{-1}|_V,\ T_{0}|_V\}$ with
{\small%
\begin{gather*}
T_{-2}|_V\!=
\!\frac{-1}{12}\!
\left[\!\!\begin{array}{rrrrr}
    0   &  0  &  0  &  3  &   0\\
    3   &  0  &  1  &  2  &   0\\
    2   &  0  &  2  &  1  &   0\\
    1   &  0  &  3  &  0  &   0\\
    0   &  0  &  0  &  0  &   0\\
\end{array}\!\!\right],\
T_{-1}|_V\!=
\!\frac{-1}{12}\!
\left[\!\begin{array}{rrrrr}
    0  &  0   &  0   & 0  &  3\\
    0  &  3   &  0   & 1  &  2\\
    1  &  2   &  0   & 2  &  1\\
    2  &  1   &  0   & 3  &  0\\
    3  &  0   &  0   & 0  &  0\\
\end{array}\!\right],\
T_{0}|_V\!=
\!\frac{-1}{12}\!
\left[\!\begin{array}{rrrrr}
     0  &  0  &  0  &   0  &  0\\
     0  &  0  &  3  &   0  &  1\\
     0  &  1  &  2  &   0  &  2\\
     0  &  2  &  1  &   0  &  3\\
     0  &  3  &  0  &   0  &  0\\
\end{array}\!\right].
\end{gather*}
}%
For the s.m.p.s $\Pi_1=T_{-2}T_{-1}T_{-1}|_V$ and $\Pi_2=T_{-1}T_{-1}T_{0}|_V$ with balancing vector 
$[\!\!\begin{array}{cc}1&9/10\end{array}\!\!]$,
the original invariant polytope algorithm terminates after 4 iterations.
Without bal\-ancing the original invariant polytope algorithm does not terminate.
\end{example}

\noindent
Example~\ref{ex_balancing_nearly} shows the advantage of the new balancing procedure in connection with nearly-s.m.p.s.
\begin{example}\label{ex_balancing_nearly}
Given $E_1=\left[\!\!\begin{array}{r r}2 &1\\-1&2\end{array}\!\!\right]$,
$E_2=\left[\!\!\begin{array}{c c}2 &0\\2& 1\end{array}\!\!\right]$,
the irreducible set $\mathcal{E}=\{E_1,E_2\}$ has $E_2 E_1$ as an s.m.p. and $\rho(\mathcal{E})=2.5396\ldots$.
Assuming we start the modified invariant polytope algorithm~\ref{alg_modinvpoly} 
with that candidate and the nearly-s.m.p. $E_2$, with corresponding leading eigenvectors 
$v_1^{(0)}=[\!\!\begin{array}{c c}0.9121\ldots &\!\!0.4100\ldots\end{array}\!\!]^T$,
$v_2^{(0)}=[\!\!\begin{array}{c c}0.4472\ldots &\!\!0.8944\ldots\end{array}\!\!]^T$ and leading dual eigenvectors
$v_1^*=    [\!\!\begin{array}{c c}0.9958\ldots &\!\!0.2238\ldots\end{array}\!\!]^T$,
$v_2^*=    [\!\!\begin{array}{c c}2.2361\ldots &\!\!0.0000\ldots\end{array}\!\!]^T$. 
For the balancing procedure as described in~\cite[Remark~3.7]{GP16}
we need to find numbers $\alpha_1,\alpha_2>0$ such that for some $h\in\NN$, say $h=10$, 
$q_{1,2}=\sup_{z\in\tilde{\mathcal{E}}^h\{v^{(0)}_1, v^{(1)}_1\}} |(v_2^*,z)|=2.0395\ldots$ and
$q_{2,1}=\sup_{z\in\tilde{\mathcal{E}}^h\{v^{(0)}_2\}} |(v_1^*,z)|=0.8196\ldots$ the following two inequalities hold
\begin{align*}
\alpha_1 \cdot 2.0395\ldots = \alpha_1 q_{1,2}&<\alpha_2\\
\alpha_2 \cdot 0.8196\ldots =  \alpha_2 q_{2,1}&<\alpha_1.
\end{align*}
This is clearly impossible since $2.0\times 0.8>1$.
Because there are no admissible balancing factors for $h=10$, 
there are no admissible balancing factors for $h>10$~\cite[Section 3]{GP16}. 

Since $E_1E_2$ is a dominant s.m.p., 
the modified invariant polytope algorithm~\ref{alg_modinvpoly} terminates 
if it is started only with that candidate, 
and thus, there exist balancing factors such that the the modified invariant polytope algorithm terminates 
when started with $E_2E_1$ and $E_1$, e.g.\ $\alpha_1=1$, $\alpha_2\simeq 0.95$ as given by our new method.

\end{example}

\begin{figure}[t]
	\centering
	\includegraphics{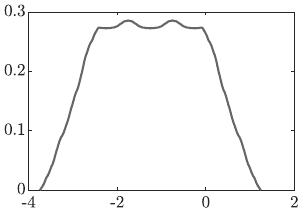}	
	\caption{The basic limit function for the subdivision scheme from Example~\ref{ex_balancing_trans}.}
    \Description[The function has support roughly -3.6 to 1.3 and is strictly positive.]%
    {The function has support roughly -3.6 to 1.3 and is strictly positive.}
	\label{fig_balancing}
\end{figure}

\subsection{Select new children -- Natural selection of vertices~\texorpdfstring{\eqref{alg_modinvpoly_selectnewvertex}}{}}
\label{sec_modinvpoly_selectnewvertex}
In the original invariant polytope algorithm~\ref{alg_invpoly}, in every iteration all vertices generated in the last iteration, 
which were not mapped inside the polytope, were used to construct new vertices. 
In the modified invariant polytope algorithm~\ref{alg_modinvpoly} we only take a subset of those. 
We choose the vertices under the mild condition that 
\begin{equation}\label{equ_modinvpoly_selectnewvertex}
\text{
for every $n\in\NN$, every vertex of $\{\tilde{A}_j\}^n V_0$ eventually will be selected,
}
\end{equation}
given that it is not absorbed already. 
In other words, we do not forget any vertex to select. 
This condition is necessary to proof that the modified invariant polytope algorithm and the original invariant polytope algorithm
have the same qualitative behaviour in Theorem~\ref{thm_termination}.

Two selection strategies turned out to work well:
\begin{itemize}
\item[$(a)$]
Choose those vertices that have the largest (e.g.\ highest decile) norm $\|V_k^+\vardot\|_2$, 
where $V_k^+$ denotes any pseudo-inverse of $V_k$.
In view of Lemma~\ref{thm_estimate_1}~$(2)$, the value $\|V_k^+ v\|_2$ is an approximation of $\|v\|_{\coast V_{k}}$
and, thus, we may assume that vertices $v$ with high value $\|V_k^+ v\|_2$ are far outside of the polytope~$\coast V_k$.
\item[$(b)$]
Choose those vertices whose parent vertex has largest norm with respect to the norm $\|\vardot\|_{\coast V}$.
\end{itemize}

With a good selection of new vertices, the polytope $\coast V_k$ gets large faster, 
thus, can absorb new vertices faster, and so the number of vertices of the invariant polytope may be smaller.
Strategy~$(a)$ reduces the number of vertices of the invariant polytope by roughly 20\%, 
strategy~$(b)$ by roughly 10\%. 
Since the intermediate bounds $b_k$ for the $\JSR$ decreases very slowly when we use strategy~$(a)$ only, 
we use three times~$(a)$ and one time~$(b)$ in our implementation.

\begin{algorithm}[{Subroutine \emph{Natural selection of vertices \eqref{alg_modinvpoly_selectnewvertex}}}]
\begingroup\allowdisplaybreaks%
\begin{flalign*}
&\mathbf{Input}\ V_k,\ \mathbf{Output}\ E_{k+1}&\\[-0.6\baselineskip]
\cline{1-2}
&\mathbf{if}\ k \neq 0 \text{ mod } 4 \ \mathbf{then}\ %
\text{compute } y_v=\|V_k^+ v\| \text{ for all } v\in \mathcal{A}V_k\setminus\mathcal{V}_k\\
&\mathbf{else}\
\text{set}\ y_v=\|w\|_{E_k} \text{ for all } v\in\mathcal{A}w\setminus\mathcal{V}_k,\ w\in\mathcal{V}_{k}\\
&\text{sort } E_k\\
&E_{k+1}:=
\text{%
Choose 10\%,
but at least $4\cdot\#\text{thread}$,
of the highest values in $E_k$\newline
and such that \eqref{equ_modinvpoly_selectnewvertex} holds
}
\end{flalign*}%
\endgroup%
\end{algorithm}
In Algorithm~\ref{alg_modinvpoly_selectnewvertex}, 
we denote with $\#\text{thread}$ the number of available threads of the computer.
The natural selection of new vertices also makes the modified invariant polytope  algorithm~\ref{alg_modinvpoly} 
applicable for problems with a large number of matrices, 
since it ensures that the number of norms to be computed in each iteration is reasonably small. 

\subsection{Simplified polytope~\texorpdfstring{\eqref{alg_modinvpoly_subset}}{}}
\label{sec_modinvpoly_subset}

In each iteration $k$ we take a subset $W_k\subseteq V_k$ of vertices 
which are used to compute the norms in step~\eqref{alg_modinvpoly_computenorm} 
for the vertices in $E_{k+1}$ due to 2 reasons.

Firstly, in some examples the vertices constructed by the modified invariant polytope algorithm~\ref{alg_modinvpoly} are very near to each other, i.e.\ are at distances in the order of the machine epsilon. 
Those vertices are irrelevant for the size of the polytope and so we disregard them.
This also protects against stability problems in the LP-programming part, since for simplices with vertices very near to each other, LP-solvers perform very badly.
This phenomenon happens frequently when there are multiple s.m.p.s..
In our implementation we use a variable threshold in~\eqref{alg_modinvpoly_subset} when determining which vertices of the polytope we use in the computation of the norm.

Secondly, as we will see in the proof of Theorem~\ref{thm_termination}, in order to obtain intermediate bounds $b_{k+1}$ for the $\JSR$, 
we are only allowed to choose vertices whose children are selected for its norm to be computed,
or whose children norms are already computed, i.e.\ it must be satisfied that
\begin{equation}\label{equ_Wk}
\tilde{\mathcal{A}}W_k\subseteq V_k\cup E_{k+1}.
\end{equation}

It would also be possible to choose a polytope $W(v)$ for each norm $\|v\|_{\coast W(v)}$ we need to compute,
since for each $v\in\RR^s$ we only need $s+1$ vectors from $V$ to compute the norm $\|v\|_{\coast V}$ exactly. 
Unfortunately we have no idea so far, how to select a good subset of $V_k$ in a reasonable amount of time, 
i.e.\ faster than the computation of the norm would take.

\subsection{Parallelisation~\texorpdfstring{\eqref{alg_modinvpoly_computenorm}}{}~\&~\texorpdfstring{\eqref{alg_modinvpoly_addvertex}}{}}
\label{sec_modinvpoly_computenorm}
\label{sec_modinvpoly_addvertex}
This is one of the main differences to the original implementation -- the idea is already developed in~\cite[Algorithm~5.1]{GZ08}.
Instead of testing each vertex one after another, and adding it immediately to the set of vertices $V_k$ if it is outside of the polytope, 
we compute the norms of all selected vertices from step~\eqref{alg_modinvpoly_selectnewvertex} with respect to the same polytope. 
Afterwards we add all vertices which are outside of the polytope at once to the set $V_k$.

This clearly leads to larger polytopes, in our examples the number of vertices increases by~10\%, 
but this is compensated by the fact that we can parallelise the computations of the norms. 
The speed-up is nearly linear in the number of available threads.
Since the linear programming model does not change, we can speed up this part further by warm starting the linear programming problems, 
i.e.\ we reuse the solutions obtained from the computations of the other vertices. 
If there are no suitable candidates to warm start with, 
we still can speed up the LP-problem by starting the search for the solution at the nearest vertex point of the polytope $W$.
The speed-up from warm starting is roughly 50-70\%.


\subsection{Norm classification~\texorpdfstring{\eqref{alg_modinvpoly_computenorm}}{}}
Before computing the exact norm of a vector $A_jv$, we try to determine the relative position 
(inside or outside of the polytope) using the estimates in Lemma~\ref{thm_estimate_1}. 
If a vertex is proven to be inside or outside of the polytope, we do not have to compute its exact norm anymore. 
Unfortunately, these estimates are quite rough and fail to determine the position for most vertices, 
except in case $(P)$ where Lemma~\ref{thm_estimate_1}~\eqref{thm_estimate_1_pos} gives very good estimates.


\begin{algorithm}[{Subroutine \emph{Norm classification~ and adding of vertices}~\eqref{alg_modinvpoly_computenorm}~\eqref{alg_modinvpoly_addvertex}}]
	\begingroup\allowdisplaybreaks%
	\begin{flalign*}
	&\mathbf{Input}\ E_{k+1},\ \mathbf{Output}\ V_{k+1}&\\[-0.6\baselineskip]
     \cline{1-2}
	&V_{k+1}:=V_k\\
	&\mathbf{for}\ v\in E_{k+1}\\
	&\quad\text{Classify } \|v\|_{\coast W_k} \text{ using Lemma~\ref{thm_estimate_1}}\\
	&\quad\mathbf{If}\ \text{$v$ is outside of $\coast V_k$}
	\ \mathbf{then}\ N(v):=\infty\\
	&\quad\mathbf{else\ if}\ \text{$v$ is inside of $\coast V_k$}\ \mathbf{then}\ N(v):=0\\
	&\quad\mathbf{else}\ N(v):=\|v\|_{\coast W_k}\\
	&\quad\mathbf{If}\ N(v)>1-\epsilon\ \mathbf{then}\ V_{k+1}=V_{k+1}\cup v 
	\end{flalign*}%
	\endgroup%
\end{algorithm}

\subsection{Spectral radius based stopping criterion~\texorpdfstring{\eqref{alg_modinvpoly_testrho}}{}}
\label{sec_modinvpoly_testrho}
The spectral radius based stopping criterion is used to find better s.m.p.-candidates, in case the chosen s.m.p.-candidates $\Pi_r$ are no s.m.p.s.
If the s.m.p.-candidates $\Pi_r$ are s.m.p.s,
then all intermediately occurring matrix products  will have spectral radius less than $1$. 
Unfortunately, the converse is not true.

\begin{algorithm}[{Subroutine \emph{Spectral radius based stopping criterion}~\eqref{alg_modinvpoly_testrho}}]
    \begin{flalign*}
    &\mathbf{Input}\ v\in V_{k+1},\ \mathbf{Output}\text{ Maybe a better s.m.p.-candidate.}&\\[-0.6\baselineskip]
    \cline{1-2}
    &\mathbf{for}\ v=\tilde{A}_{j_n}\cdots \tilde{A}_{j_0} v_s^{(0)}\in V_{k+1}\ \\
    &\qquad\text{Compute } \rho = \rho(\tilde{A}_{j_n}\cdots \tilde{A}_{j_0})^{1/n}\\
    &\qquad\mathbf{if}\ \rho>1 \ \mathbf{then} 
    \text{ restart algorithm with s.m.p.-candidate ${A}_{j_n}\cdots {A}_{j_0}$}
    \end{flalign*}%
\end{algorithm}

As noted, if the candidates are not s.m.p.s, 
it can happen that all intermediately occurring matrix products have spectral radius less than $1$ 
and that the modified invariant polytope algorithm~\ref{alg_modinvpoly} never stops, 
see Example~\ref{ex_testrho}.
Nevertheless, this never happened in any non-artificial example. 
Furthermore, products with larger normalized spectral radius always occurred very fast. 
Thus, from a practical point of view, 
the spectral radius based stopping criterion is a better way to check whether the candidates are s.m.p.s 
than the eigenplane based method described in~\cite[Proposition 2]{GP13}. 
On the other hand, whenever the eigenplane based method~\cite[Proposition 2]{GP13} is applicable,
it is fail-proof and eventually will strike if an s.m.p.-candidate is not an s.m.p..
Thus, in our implementation of the modified invariant polytope algorithm both stopping criteria are used.

We now illustrate how the new stopping criterion~\eqref{alg_modinvpoly_testrho} may fail. 
For that purpose, we introduce for given $\eta\geq0$ the set 
\begin{equation*}
\mathcal{M}_\eta=\{
(j_n)_{n}\in\{1,\ldots,J\}^\NN\ :\ %
\rho(\tilde{A}_{j_m}\cdots \tilde{A}_{j_1})^{1/m}\leq\eta,\quad \forall m\in\NN
\}.
\end{equation*}
For $\eta=1$, the products $\tilde{A}_{j_n}\cdots \tilde{A}_{j_1}$, $(j_n)_n\in \mathcal{M}_1$, 
are exactly the products occurring in the modified invariant polytope algorithm~\ref{alg_modinvpoly} until the spectral radius based stopping criterion~\eqref{alg_modinvpoly_testrho} strikes.
The hope would be, that the norms of the products in that sequence stay bounded, 
i.e.\ $\exists\,C>0$ such that $\|\tilde{A}_{j_n}\cdots \tilde{A}_{j_1}\|<C$ for all $n\in\NN$. 

 \begin{example}\label{ex_testrho}
Let
$A=\left[\begin{array}{c c}1 & 1\\0&1\end{array}\right]$,
$B=\left[\begin{array}{c c}1 & 0\\1&1\end{array}\right]$.
Clearly $\JSR(\{A,B\})=\rho(AB)^{1/2}=(\sqrt{5}+1)/2$ and $\{A,B\}$ is irreducible.
We choose $\Pi_1=A$ and $\Pi_2=B$ as our two (wrong) s.m.p.-candidates, thus, 
$\rho_c=\rho_r=1$,
$\tilde{\Pi}_1=\tilde{A}=A$, $\tilde{\Pi}_2=\tilde{B}=B$,
$V_0=\mathcal{H}=\{v_1^{(0)},v_2^{(0)}\}$ with
$v_1^{(0)}=[\!\!\begin{array}{c c}1&0\end{array}\!\!]^T$,
$v_2^{(0)}=[\!\!\begin{array}{c c}0&1\end{array}\!\!]^T$,
and since there are no extra vertices, $V_0=\mathcal{H}$.

Now, we use a (bad) selection procedure of new vertices $E_{k+1}$ in~\eqref{alg_modinvpoly_selectnewvertex} 
of the natural selection of vertices;
namely, we choose only the vertices $A^n v_2$ and $B^n v_1$, $n\in\NN$.\footnote{
Actually, this selection of vertices is neither type $(a)$ or $(b)$ from Section~\ref{sec_modinvpoly_selectnewvertex},
nor does it fulfil the necessary condition~\eqref{equ_Wk}.
}

We now show that the algorithm 
constructs an infinitely big polytope, 
solely with vertices generated by matrix products 
whose averaged spectral radius is equal to $\rho_c$.
Indeed, for $n\in\NN$, applying the sequence of products $A^n
$ to the starting vector $v_2$
we get the sequences of vector 
$A^n v_2=[\!\!\begin{array}{c c}n&1\end{array}\!\!]^T$,
where $\rho(A^n)^{1/n}=1$.
The same calculation shows that 
$B^n v_1=[\!\!\begin{array}{c c}1&n\end{array}\!\!]^T$
and $\rho(B^n)^{1/n}=1$.
Finally, $\comin V_{k}=
\comin \{
[\!\!\begin{array}{c c}1&k\end{array}\!\!]^T,
[\!\!\begin{array}{c c}k&1\end{array}\!\!]^T
\}
$, $k\in\NN$.
\end{example}

\subsection{Proof for Theorem~\ref{thm_termination}}\label{proof_termination}
\begin{proof}
    $(i)$ 
    Let $\delta=1$.
    Assume that the original invariant polytope algorithm~\ref{alg_invpoly} terminates at depth $N\in\NN$ with vertices $V_N^{orig}$,
    i.e.\ $\tilde{\mathcal{A}} \coast V_N^{orig} \subseteq \coast V_N^{orig}$.
    By construction of the original invariant polytope algorithm~\ref{alg_invpoly} and by~\eqref{equ_modinvpoly_selectnewvertex},
    there exists $K\in\NN$, $K\geq N$, such that
    $
    \coast V_N^{orig}=\coast \bigcup_{n=0}^N \tilde{\mathcal{A}}^n V_0 \subseteq \coast V_K^{mod}.
    $
    We claim that $\coast V_K^{mod}$ is an invariant polytope.
    By construction of the modified invariant polytope algorithm~\ref{alg_modinvpoly},
    $
    \coast V_K^{mod}\subseteq \coast \bigcup_{k=0}^K \tilde{\mathcal{A}}^k V_0.
    $
    By the invariance property of the polytope $\coast V_N^{orig}$ and by $K>N$,
    $
    \coast \bigcup_{k=0}^K \tilde{\mathcal{A}}^k V_0 = \coast V_N^{orig}.
    $
    It follows that $\coast V_N^{orig}=\coast V_K^{mod}$, and thus $\coast V_K^{mod}$ is an invariant polytope.
    
    The other direction follows similarly.
    
    $(ii)$ 
    Assume that $\JSR(\mathcal{A})<\delta^{-1}\cdot\rho_c$, or equivalently, $\JSR(\tilde{\mathcal{A}})<\gamma<1$ for some $\gamma>0$.
    \cite[Theorem I (b)]{BW92} implies that $\|\tilde{A}_{i_k}\cdots \tilde{A}_{i_1}\|\rightarrow 0$ for any product $\tilde{A}_{i_k}\cdots \tilde{A}_{i_1}\in\tilde{\mathcal{A}}^n$ as $k\rightarrow\infty$. 
    Thus, the modified invariant polytope algorithm eventually terminates.
    
    $(iii)$
    Let $k\in\NN_0$.
    Without loss of generality we assume that $1<b_{k+1}<b_{k}$.
    Let $v\in V_{k+1}$. We need to show that $\|\tilde{A}_j v\|_{\coast V_{k+1}}\leq b_{k+1}$ 
    for all $j\in\{1,\ldots,J\}$.
    If $\tilde{A}_j v\in V_{k+1}$, then we trivially get $\|\tilde{A}_j v\|_{\coast V_{k+1}}\leq1<b_{k+1}$.
    Thus, we assume that $\tilde{A}_j v\notin V_{k+1}$.
    Let $k'\in\NN_0$ be the iteration in which $N(\tilde{A}_j v)$ was computed.
    By~\eqref{equ_Wk}, $\coast W_{k'}\subseteq \coast V_{k+1}$.
    Therefore,
    $
    \|\tilde{A}_j v\|_{\coast V_{k+1}}\leq
    \|\tilde{A}_j v\|_{\coast V_{k'}}\leq
    \|\tilde{A}_j v\|_{\coast W_k'}=
    N(v)(1+\epsilon)^{-1}\leq b_{k+1}
    $.
\end{proof}

\section{Applications and numerical results}\label{sec_examples}
In this section we illustrate the modified Gripenberg algorithm~\ref{alg_modgrip} and the modified invariant polytope algorithm~\ref{alg_modinvpoly} with numerical examples.
For our tests we use matrices from standard applications, as well as random matrices.
We also try to repeat tests previously performed in the literature~\cite{BC11, BJP06,BC13,GP13,GP16,MOS01}.

The parameters for the various algorithms (ours and others) 
are chosen such that they terminate after a reasonably short time. 
For the modified invariant polytope algorithm~\ref{alg_modinvpoly}
the parameters are chosen such that the modified invariant polytope algorithm terminates at all, 
hopefully in shortest time.
We do not report the exact parameters, since we believe they are of no value for the reader.
The tests are performed using an
Intel Core i5-4670S@3.8GHz, 8GB RAM with the software
Matlab R2017a and
\href{https://www.gurobi.com}{Gurobi solver v8.0}.\footnote{
$(a)$ Our implementation also uses software containing functions from the \href{https://www.mathworks.com/matlabcentral/fileexchange/33202-the-jsr-toolbox}{JSR-Toolbox v1.2b}~\cite{Jung2014}. Permission to use has been kindly granted.
$(b)$ The \href{https://www.gurobi.com}{Gurobi solver} is free for academic use.
}

For the tests we report
$\bullet$ the dimension \emph{dim} of the matrices,
$\bullet$ the duration \emph{time} needed for the computation (this value is only to be understood in magnitudes),
$\bullet$ the number of matrices \emph{$J$} in the test set $\mathcal{A}$,
$\bullet$ the number of vertices \emph{\#V} of the invariant polytope,
$\bullet$ spectral maximizing product(s) \emph{s.m.p.}, and
$\bullet$ the number \emph{\#tests} of test runs.

\subsection{Main results}
\subsubsection{Modified invariant polytope algorithm}
To summarize, we can say that the single-threaded modified invariant polytope algorithm~\ref{alg_modinvpoly} 
is roughly three times faster than the original invariant polytope algorithm~\ref{alg_invpoly}.
If the dimension of the matrices is sufficiently large,
the parallelised modified invariant polytope algorithm~\ref{alg_modinvpoly}
scales nearly linearly with the number of available threads (for at least up to 16 threads). More precisely,
\begin{itemize}
\item for pairs of random matrices the modified invariant polytope algorithm~\ref{alg_modinvpoly} 
reports the exact value of the $\JSR$ in reasonable time up to dimension 25,
\item for Daubechies matrices the modified invariant polytope  algorithm 
reports the exact value of the $\JSR$ in reasonable time up to dimension 42,
\item for non-negative matrices it strongly depends on the problem. 
For random, sparse, non-negative matrices the modified invariant polytope algorithm 
works up to dimension $3000$ or higher. 
For the (sparse) matrices arising in the context of code capacities (Section~\ref{sec_capacity}) 
the modified invariant polytope algorithm works well only up to dimension $16$.
On the one hand this is due to the large number of matrices to be considered for these examples,
on the other hand the structure of the individual matrices seems to play a role.
\end{itemize}

\subsubsection{Modified Gripenberg algorithm}
For the modified Gripenberg algorithm~\ref{alg_modgrip} we can say, that it finds in almost all cases an s.m.p..
Thus, for fast estimates of the $\JSR$, the modified Gripenberg algorithm~\ref{alg_modgrip} may be used independently, 
e.g. in applications where the parameters where a matrix family has highest/lowest $\JSR$ need to be determined.
In a second step one then may compute the exact $\JSR$ for the found parameters using the modified invariant polytope algorithm.
 
Clearly, since the computation of the $\JSR$ is NP-hard, there must be sets of 
matrices for which the modified Gripenberg algorithm~\ref{alg_modgrip} fails\footnote{since the modified Gripenberg algorithm has polynomial complexity,} and we report mostly these cases together with a comparison with other algorithms.
These are

$\bullet$ the \emph{random Gripenberg} algorithm~\ref{alg_modgrip} described in Remark~\ref{rem_alg_randmodgrip},
$\bullet$ the \emph{Gripenberg} algorithm,
$\bullet$ the \emph{modified invariant polytope} algorithm~\ref{alg_modinvpoly} and
$\bullet$ the Monte-Carlo type \emph{genetic} algorithm~\cite{BC11}.

At least in our test runs, the modified Gripenberg algorithm~\ref{alg_modgrip} performs best, 
in the sense that in most cases it returns a correct s.m.p.\ in fastest time. More precisely,
for long s.m.p.s the modified Gripenberg algorithm~\ref{alg_modgrip} performs best and
for large sets of matrices the genetic algorithm and the modified invariant polytope algorithm~\ref{alg_modinvpoly} performs best.

\subsection{Randomly generated matrices}\label{sec_random}
We first present the behaviour of the modified invariant polytope algorithm~\ref{alg_modinvpoly} 
for pairs of matrices of dimensions $2$ to $20$ with normally distributed values whose 
$(a)$ matrices have the same 2-norm, 
$(b)$ matrices have the same spectral radius, and 
$(c)$ matrices have the same spectral radius and $\delta=0.99$
(where $\delta$ was the parameter controlling the accuracy of the modified invariant polytope algorithm~\ref{alg_modinvpoly},
 see Section~\ref{sec_modinvpoly_accuracy}~\eqref{alg_modinvpoly_accuracy}).
We see in Table~\ref{table_random_pairs} that the modified invariant polytope algorithm 
is applicable for pairs of random matrices up to dimension $25$, 
for which it takes roughly one weekend to complete.
For $\delta=0.95$ the modified invariant polytope algorithm is comparable to Gripenberg's algorithm.

Although the modified invariant polytope algorithm~\ref{alg_modinvpoly} produces 
polytopes with roughly twice as much vertices compared to the same test with the original invariant polytope algorithm in~\cite[Table~2]{GP13}, it still works very well for matrices of dimension 20.

\begin{table}[ht]
	\centering
	\caption[]{%
		Computation of the $\JSR$ for random pairs of matrices using the modified invariant polytope algorithm~\ref{alg_modinvpoly}.
		\emph{$\delta$}: accuracy parameter for the modified invariant polytope algorithm~\eqref{alg_modinvpoly_accuracy}, 
		\emph{dim}: dimension of the matrices, 
		\emph{\#V}: number of vertices of the invariant polytope, 
		\emph{time}: time needed to compute the invariant polytope, 
		\emph{$J$}: number of matrices, 
		\emph{$\#test$}: number of test runs.\newline
		\resizebox{\linewidth}{!}{$^{\dagger}$We print median values, since there are always outliers if $\delta\!=\!1$. The average values are roughly 100 times bigger.}
	}
	\begin{tabular}{|c|rr|rr|rr|} 
		\hline
		\multicolumn{7}{|c|}{\emph{$J=2$, $\#test=20$, median values$^{\dagger}$}}\\
		&\multicolumn{2}{|c|}{\emph{$(a)$ $\delta=1$}}&\multicolumn{2}{|c|}{\emph{$(b)$ $\delta=1$}}&\multicolumn{2}{|c|}{\emph{$(c)$ $\delta=0.99$}}\\ 
		&\multicolumn{2}{|c|}{\hphantom{imm}\emph{ equal norm\hphantom{mm}}}&\multicolumn{2}{|c|}{\emph{equal spectral radius}}&\multicolumn{2}{|c|}{\emph{equal spectral radius}}\\ 
		
		\emph{dim}&\emph{time}&\emph{\#V}&\emph{time}&\emph{\#V}&\emph{time}&\emph{\#V}\\ \hline  
		
		2    &  1.1$\,s$ &    5$\cdot 2$ &  1.2$\,s$ &    6$\cdot 2$ &  0.2$\,s$ &    5$\cdot 2$ \\       
		4    &  1.4$\,s$ &   17$\cdot 2$ &  1.8$\,s$ &   77$\cdot 2$ &  0.8$\,s$ &   19$\cdot 2$ \\       
		6    &  2.0$\,s$ &   47$\cdot 2$ &  2.5$\,s$ &  130$\cdot 2$ &  1.5$\,s$ &   47$\cdot 2$ \\       
		8    &  2.5$\,s$ &  100$\cdot 2$ &  3.9$\,s$ &  220$\cdot 2$ &  2.1$\,s$ &   98$\cdot 2$ \\       
		10    &  4.9$\,s$ &  270$\cdot 2$ &  5.1$\,s$ &  320$\cdot 2$ &  3.3$\,s$ &  220$\cdot 2$ \\       
		12    &  4.7$\,s$ &  280$\cdot 2$ &   11$\,s$ &  770$\cdot 2$ &  6.6$\,s$ &  570$\cdot 2$ \\       
		14    &  8.4$\,s$ &  510$\cdot 2$ &   21$\,s$ & 1100$\cdot 2$ &   12$\,s$ &  800$\cdot 2$ \\       
		16    &   25$\,s$ & 1100$\cdot 2$ &   33$\,s$ & 1400$\cdot 2$ &   25$\,s$ & 1000$\cdot 2$ \\       
		18    &   90$\,s$ & 2100$\cdot 2$ &  200$\,s$ & 2500$\cdot 2$ &   44$\,s$ & 1600$\cdot 2$ \\       
		20    &  295$\,s$ & 3100$\cdot 2$ & 5000$\,s$ & 6200$\cdot 2$ &  800$\,s$ & 3900$\cdot 2$ \\ \hline
	\end{tabular}
	\label{table_random_pairs}
\end{table}

Random matrices with non-negative entries are a worthy test case, since the computation of the invariant polytope 
(i.e.\ the main loop in the modified invariant polytope algorithm~\ref{alg_modinvpoly}) 
always finishes after a few seconds, nearly regardless of the dimension. 
Since the implementation is not optimized for such high dimensions, 
the modified invariant polytope algorithm still needs some minutes to terminate, mostly due to the preprocessing steps~\eqref{alg_modinvpoly_modgrip}-\eqref{alg_modinvpoly_balancing}.
For sparse matrices with non-negative entries, 
the modified invariant polytope algorithm~\ref{alg_modinvpoly} performs slightly worse, 
but is still applicable up to dimension $2000$ or higher. Again, it is very likely that it still works for even larger matrices if the implementation were optimized for such matrices,
see Table~\ref{table_random_pairs_nonneg} for the results. We again give the median values. The average values for these cases are roughly 10\% higher.
Another benchmark for non-negative matrices is presented in Section~\ref{sec_capacity}.

\begin{table}[ht]
	\centering
	\caption{Computation of the $\JSR$ using the modified invariant polytope algorithm~\ref{alg_modinvpoly}
		for random pairs of matrices with non-negative entries.
		\emph{dim}: dimension of the matrices, 
		\emph{$J$}: number of matrices, 
		\emph{$\#test$}: number of test runs.
		\emph{time}: time needed to compute the invariant polytope, 
		\emph{\#V}: number of vertices of the 
		polytope.
		\newline
		$^{\dagger}$We print the median values, since there are always outliers if $\delta=1$. 
		The average values are roughly 100 times bigger.    
		$^{\dagger\dagger}$Since the matrices are random, most of the sparse matrices have non-trivial invariant subspaces which reduces the effective dimension of the matrices by roughly 10\%.
		$^{\dagger\dagger\dagger}$Most cones have 8 or 16 vertices, because the algorithm terminates after 3 or 4 iterations. The algorithm does not check whether all of these vertices are really outside of the polytope.
	}    
	\begin{tabular}{|c|rr|rr|rr|rr|}
		\hline
		\multicolumn{9}{|c|}{\emph{$J=2$, $\#test=20$, non-negative entries, equal spectral radius, 
				median values$^{\dagger}$}}                \\ 
		&\multicolumn{2}{|c|}{\emph{ 0\%~sparsity}}&\multicolumn{2}{|c|}{\emph{ 90\%~sparsity}}&\multicolumn{2}{|c|}{\emph{ 98\%~sparsity}}&\multicolumn{2}{c|}{\emph{ 99\%~sparsity}}\\
		\emph{dim}$^{\dagger\dagger}$&\emph{\hphantom{m}time}&\emph{\hphantom{m}\#V$^{\dagger\dagger\dagger}$}&\emph{\hphantom{m}time}&\emph{\hphantom{m}\#V$^{\dagger\dagger\dagger}$}&\emph{\hphantom{m}time}&\emph{\hphantom{mm}\#V}&\emph{\hphantom{m}time}&\emph{\hphantom{mm}\#V}\\ \hline  
		20    &      0.3$\,s$ &     7        &      1.7$\,s$ &        42    &               &              &               &        \\        
		50    &      0.3$\,s$ &     8        &      1.6$\,s$ &        50    &      2.2$\,s$ &          50  &               &        \\        
		100   &      0.4$\,s$ &     8        &      0.8$\,s$ &        25    &       17$\,s$ &        1300  &               &        \\        
		200   &      0.5$\,s$ &     8        &      1.0$\,s$ &        23    &      5.0$\,s$ &         220  &      110$\,s$ &   2600 \\        
		500   &      1.2$\,s$ &     8        &      1.8$\,s$ &        16    &      7.7$\,s$ &          90  &       26$\,s$ &    310 \\        
		1000  &      6.3$\,s$ &     8        &       11$\,s$ &        16    &       30$\,s$ &          45  &       72$\,s$ &    110 \\        
		2000  &       35$\,s$ &     8        &       72$\,s$ &        16    &       35$\,s$ &           8  &      290$\,s$ &    64  \\        \hline
	\end{tabular}
	\label{table_random_pairs_nonneg}  
\end{table}

In Table~\ref{table_random_modgrip} we see how the modified Gripenberg algorithm~\ref{alg_modgrip} 
performs on random matrices with equally distributed values in $[-5,\ 5]$ to mimic the test in~\cite[Section 4.2]{BC11}.
Interestingly, the genetic algorithm performs very bad, as does the \emph{random} modified Gripenberg algorithm.
We report the \emph{succes}-rate, i.e.\ how often the algorithms did find an s.m.p.\ in percent.

\begin{table}[ht]
	\centering
	\caption{Performance of various algorithms searching for s.m.p.s.
		We use the modified invariant polytope algorithm to test, whether the found s.m.p.-candidates are indeed s.m.p.s.
		\emph{dim}: dimension of the matrices, 
		\emph{$J$}: number of matrices, 
		\emph{success}: percentage of how often a correct s.m.p. is found.
		\emph{$\#test$}: number of test runs.
		\emph{time}: time needed by the algorithm.
	}
	\begin{tabular}{|l|rr|rr|rr|}
		\hline
		\multicolumn{7}{|c|}{$\# tests=100$}   \\
		&\multicolumn{2}{|c|}{$J=2$, $dim=2$}&\multicolumn{2}{|c|}{$J=4$, $dim=4$}&\multicolumn{2}{|c|}{$J=8$, $dim=8$}   \\
		Algorithm               & \emph{success} & \emph{time}& \emph{success} & \emph{time}& \emph{success} & \emph{time}\\
		\hline
		mod. invariant polytope & 100\%	&	$1.1\,s$ & 100\% & $4.3\,s$   & 100\% & $40.0\,s$\\
		mod. Gripenberg         & 100\%	&	$1.9\,s$ & 100\% & $4.1\,s$   & 100\% &  $5.4\,s$\\
		random Gripenberg       & 100\%	&	$1.8\,s$ &  99\% & $3.8\,s$   &  82\% &  $4.3\,s$\\
		Gripenberg              & 100\% &	$3.8\,s$ & 100\% & $20.3\,s$  & 100\% & $82.1\,s$\\
		brute force             & 100\% &	$180\,s$ &  98\% & $180.0\,s$ & 74\% & $180.0\,s$\\
		genetic                 & 100\% &	$7.1\,s$ &  97\% & $9.3\,s$   & 87\% &  $12.0\,s$\\ \hline
	\end{tabular}
	\label{table_random_modgrip}
\end{table}


\subsection{Handpicked generic matrices}
\begin{example}\label{ex_119}
Let 
{\small
\begin{equation*}
X_1=
\left[\!\begin{array}{r r}
\dfrac{15}{92} & \dfrac{-73}{79}\\[2.ex]
\dfrac{56}{59}  & \dfrac{89}{118}\end{array}\!\right]
,\
X_2=
\left[\!\begin{array}{c c}
\dfrac{-231}{241} & \dfrac{-143}{219}\\[2.ex]
\dfrac{103}{153}  & \dfrac{-38}{65}\end{array}\!\right]
.
\end{equation*}
}
The set $\mathcal{X}=\{X_1,X_2\}$ has an s.m.p.\ of length 119 with normalized spectral radius
 $\JSR(\mathcal{\mathcal{X}})\simeq 1.01179$. 
Gripenberg's algorithm finds an s.m.p.\ after an evaluation of $\sim$630k products, taking roughly ten minutes. 
Both the modified Gripenberg algorithm, as well as the genetic algorithm fail.
The modified invariant polytope algorithm~\ref{alg_modinvpoly} finds an s.m.p.\ after less than one minute.
The test results are in Table~\ref{table_119}.
\end{example}

\begin{table}[tb]
\centering
\caption{Performance of various algorithms searching for s.m.p.s for a particular hard problem.
For the test set $\mathcal{X}$ (Example~\ref{ex_119}) all fast algorithms fail.
\emph{dim}: dimension of the matrices, 
\emph{lower bd.}: computed lower bound for the $\JSR$,
\emph{$J$}: number of matrices, 
\emph{time}: time needed by the algorithm.
}
\begin{tabular}{|l|l|c|r|}
\hline
\emph{Testset} & \emph{Algorithm}  & \emph{lower bd.} & \emph{time} \\
\hline
$\mathcal{X}$           & mod. invariant polytope & $1.01179\ldots$     & $40\,s$ \\
$J=2$                   & mod. Gripenberg         & $1.011\underline{3}0\ldots$     & $4\,s$   \\
$dim=2$                 & random Gripenberg       & $1.0117\underline{2}\ldots$     & $10\,s$   \\
                        & Gripenberg              & $1.01179\ldots$     & $580\,s$\\
                        & genetic                 & $1.011\underline{3}0\ldots$     & $8\,s$   \\
\hline
\end{tabular}
\label{table_119}
\end{table}

Example~\ref{ex_longsmp} is of interest because it is a rather simple family of two matrices with an arbitrary long s.m.p..
\begin{example}\label{ex_longsmp}
Let $n\in\NN$,
$C_0=\left[\begin{array}{c c}1 & 1\\0&1\end{array}\right]$ and
${C}_n=\left[\begin{array}{c c}0 & 0\\\tfrac{1}{n}e^{1+\frac{1}{n}}& 0\end{array}\right]$,
Then $C_0^{n}C_n$ is an s.m.p.\ for the set $\mathcal{C}_n=\{C_0,C_n\}$ with $\JSR(\mathcal{C}_n)=e^{1/n}$.

The genetic algorithm fails for most matrices of that family. All other algorithms report the correct s.m.p.\ in less than $5\,s$.
The test results are in Table~\ref{table_longsmp}.
\end{example}
\begin{proof}[Proof for Example~\ref{ex_longsmp}]\label{proof_longsmp}
Define $\tilde{C}_n=\left[\begin{array}{c c}0 & 0\\n& 0\end{array}\right]$, $n\in\NN$.
A product of $C_0$ and $\tilde{C}_n$ is non-zero if and only if it is of the form 
$
C_0^{i_1}\tilde{C}_n C_0^{i_2}\tilde{C}_n \cdots  \tilde{C}_n C_0^{i_m}. 
$
Since the spectral radius does not change under cyclic permutation, we can assume that the product is of the form
$C_0^{i_1}\tilde{C}_n C_0^{i_2}\tilde{C}_n \cdots C_0^{i_m}\tilde{C}_n$. A (lengthy) straightforward computation shows that the normalized spectral radius of this product is 
$(n^m \prod_{j=1}^m i_j)^{1/(m+\sum_{j=1}^m i_j)}$. Taking the gradient with respect to $i$ and setting it to zero, we immediately get that all $i_j$ must be equal. 
Thus, the normalized spectral radius of all finite products is maximized with a product of the form $C_0^m \tilde{C}_n$ whose normalized spectral radius equals $mn^{1/(1+m)}$. For fixed $m\in\NN$ this term has its maximum at 
$n=\frac{1}{m} e^{1+1/m}$.
Thus, $C_0^nC_n$ is the product with largest normalized spectral radius under all finite products.
Using~\eqref{equ_generalized_jsr} we conclude that $\JSR(\mathcal{C})=\rho(C_0^nC_n)^{1/(n+1)}=(e^{(n+1)/n})^{1/(n+1)}=e^{1/n}$.
\end{proof}

\begin{table}[tb]
\centering
\caption{Performance of various algorithms searching for s.m.p.s.
	For the test sets $\mathcal{C}_{n}$ (Example~\ref{ex_longsmp}) the genetic algorithm mostly fails.
    \emph{dim}: dimension of the matrices, 
    \emph{lower bd.}: computed lower bound for the $\JSR$,
    \emph{$J$}: number of matrices, 
    \emph{s.m.p.}: an s.m.p.,
    \emph{time}: time needed to compute the invariant polytope, 
}
\begin{tabular}{|l|l|c|r|}
\hline
\emph{Test set} & \emph{Algorithm}  & \emph{lower bd.} & \emph{time} \\
\hline
$\mathcal{C}_{15}$      & mod. invariant polytope & $1.0689\ldots$     & $1.7\,s$ \\
$J=2$                   & mod. Gripenberg         & $1.0689\ldots$     & $3.3\,s$ \\
$dim=2$                 & random Gripenberg       & $1.0689\ldots$     & $3.2\,s$ \\
$s.m.p.=C_0^{15}C_{15}$ & Gripenberg              & $1.0689\ldots$     & $0.1\,s$ \\
                        & genetic                 & $1.0689\ldots$     & $7.0\,s$ \\
\hline
$\mathcal{C}_{30}$      & mod. invariant polytope & $1.0338\ldots$     & $2.5\,s$ \\
$J=2$                   & mod. Gripenberg         & $1.0338\ldots$     & $4.0\,s$ \\
$dim=2$                 & random Gripenberg       & $1.0338\ldots$     & $4.3\,s$ \\
$s.m.p.=C_0^{30}C_{30}$ & Gripenberg              & $1.0338\ldots$     & $0.1\,s$ \\
                        & genetic                 & $1.0\underline{2}15\ldots$     & $6.6\,s$ \\
\hline
$\mathcal{C}_{60}$      & mod. invariant polytope & $1.0168\ldots$     & $4.0\,s$ \\
$J=2$                   & mod. Gripenberg         & $1.0168\ldots$     & $3.1\,s$ \\
$dim=2$                 & random Gripenberg       & $1.0168\ldots$     & $4.3\,s$ \\
$s.m.p.=C_0^{60}C_{60}$ & Gripenberg              & $1.0168\ldots$     & $0.1\,s$ \\
                        & genetic                 & $1.0\underline{0}00\ldots$     & $6.3\,s$ \\
\hline
\end{tabular}
\label{table_longsmp}
\end{table}

\subsection{Capacity of codes with forbidden difference sets}\label{sec_capacity}

In some electromagnetic recording systems, the bit error rate is often dominated by a small set of certain \emph{forbidden difference patterns} $D$. 
Thus, one needs to construct sets of allowed words with values in $\{0,1\}$, all of whose possible differences do not yield such a forbidden pattern. Clearly, one wants codes which constrain the number of all possible patterns as least as possible. We are interested in how constraining a given forbidden difference pattern is, which we denote as the \emph{capacity} $\operatorname{cap} D\in[0,\ 1]$. 
The larger the capacity, the better. This problem can be expressed in terms of the $\JSR$ of a finite set of matrices. See~\cite{MOS01} for more details.
The occurring matrices in this application only have entries in $\{0,1\}$, but their dimension, as well as the number of matrices increases exponentially with the length of the forbidden difference patterns, e.g.\ for
\newcommand{\tone}[0]{\!\text{ \scalebox{0.9}{\raisebox{0.25 ex}{1}}}} 
\newcommand{\tzer}[0]{\!\text{ \scalebox{0.9}{\raisebox{0.25 ex}{0}}}} 
    $D=\{\nn\pp\mm\}$ the capacity of $D$ is given by
    \begin{align*}
    \operatorname{cap} D=\log_2\JSR
    \Big(
    \Bigg\{
    \left[\!\begin{array}{cccc}    
    \tone&\tzer&\tone&\tzer\\
    \tzer&\tzer&\tzer&\tzer\\
    \tzer&\tone&\tzer&\tone\\
    \tzer&\tone&\tzer&\tone
    \end{array}\!\right],
    \left[\!\begin{array}{cccc}    
    \tone&\tzer&\tone&\tzer\\
    \tzer&\tzer&\tone&\tzer\\
    \tzer&\tone&\tzer&\tzer\\
    \tzer&\tone&\tzer&\tone
    \end{array}\!\right],
    \left[\!\begin{array}{cccc}    
    \tone&\tzer&\tone&\tzer\\
    \tone&\tzer&\tzer&\tzer\\
    \tzer&\tzer&\tzer&\tone\\
    \tzer&\tone&\tzer&\tone
    \end{array}\!\right],
    \left[\!\begin{array}{cccc}    
    \tone&\tzer&\tone&\tzer\\
    \tone&\tzer&\tone&\tzer\\
    \tzer&\tzer&\tzer&\tzer\\
    \tzer&\tone&\tzer&\tone
    \end{array}\!\right]
    \Bigg\}
    \Big).
    \end{align*}

We use the modified invariant polytope algorithm to compute the capacities for the forbidden difference patterns $D$ taken 
from~\cite[p. 10]{MOS01}, \cite[Table~1]{BC11}, \cite[p. 6]{BJP06} and for difference sets with the additional symbol $\ppmm$, denoting $+1$ and $-1$, discussed in~\cite[Section v]{BJP06}. Nearly all of these capacities were not known exactly before.

For most difference sets $D$, there are several s.m.p.s., that not only share the same leading eigenvalue but also the same eigenvector. Due to this reason, the modified invariant polytope algorithm~\ref{alg_modinvpoly} sometimes only gives a bound for the $\JSR$ up to the accuracy in which we can compute the norms $\|\tilde{A_j} v\|_{\coast W}$.
We implemented the Matlab routine \texttt{codecapacity} which computes the set of matrices needed for the $\JSR$ computation for a given difference set $D$. It works for reasonably small difference sets, and theoretically also for difference words with entries in $\{-K,\ldots,K\}$, $K\in\NN$.

The exact computation of the capacity using the modified invariant polytope algorithm~\ref{alg_modinvpoly} was only possible if we used the estimates for the Minkowski norm in Lemma~\ref{thm_estimate_1_pos}~\eqref{thm_estimate_1}, which reduced the norms to be computed by a factor of 100.


The difference set $D_4=\{\Circ\Circ\!+\Circ-\}$, taken from~\cite[Table~1]{BC11}, 
is a good test case for the modified Gripenberg algorithm, 
since the computation of the capacity translates to the $\JSR$ of a set with $256$ matrices of dimension $16$. 
As one can expect, Gripenberg's algorithm fails to find an s.m.p., 
also the modified Gripenberg algorithm~\ref{alg_modgrip} fails.
The genetic algorithm in most cases finds a better product than the one found by Gripenberg's algorithm. 
The modified invariant polytope algorithm~\ref{alg_modinvpoly} also finds that better product after a while, 
but it did not terminate in reasonable time. Thus, the exact capacity,
and whether an s.m.p.\ exists is still unknown. 
The test results are in Table~\ref{table_capacity} and~\ref{table_capacity_modgrip}.

\begin{table}[tb]
\centering
\caption{Capacity of various difference sets $D$.
 $\epsilon=10^{-10}$: computational accuracy,
\emph{$\operatorname{cap}(D)$}: capacity,
\emph{D}: set of forbidden differences,
\emph{$dim$}: dimension of the matrices, 
\emph{$J$}: number of matrices, 
\emph{\#V}: number of vertices of the invariant polytope, 
\emph{s.m.p.}: an s.m.p..
\newline
$^?$For some sets $D$ the modified invariant polytope algorithm~\ref{alg_modinvpoly} did not terminate, thus the given product is not proven to be an s.m.p.
}
\begin{tabular}{|lclrrr|}
\hline
\emph{$D$}      &      \emph{s.m.p.}  & \emph{$\operatorname{cap}(D)$} &     \emph{\#V} & \emph{$J$} & \emph{$dim$}   \\
\hline 
$\ppmm\ppmm$          &         $B_2B_3$         & $1/2$                        &     $3\cdot 2$ &    4 &   2   \\
$\nn\ppmm$            &          $B_3$           & $0$                          &     $2\cdot 2$ &    4 &   2   \\
\hline   
$\nn\pp\mm$           &         $B_4B_1$         & $0.6942\ldots$               &    $45\cdot 2$ &    4 &   4   \\
$\nn\pp\pp$           &           $B_2{}^?$            & $0.6942\ldots+[0,\epsilon]$  &    $25\cdot 2$ &    4 &   4   \\
$\nn\ppmm\ppmm$       &         $B_1B_2$         & $1/2$                        &    $37\cdot 2$ &   16 &   4   \\
$\ppmm\ppmm\ppmm$     &       $B_6B_4B_1$        & $2/3$                        &    $19\cdot 2$ &   16 &   4   \\ 
\hline
$\pp\mm\pp\mm$        &         $B_1B_2$         & $0.9468\ldots$               &    $86\cdot 2$ &    2 &   8   \\
$\pp\pp\pp\mm$        &         $B_1B_2$         & $0.9005\ldots$               &    $40\cdot 2$ &    2 &   8   \\
$\pp\pp\pp\pp$        &          $B_1$           & $0.9468\ldots$               &    $84\cdot 2$ &    2 &   8   \\
$\nn\pp\mm\pp$        &          $B_3$           & $0.8791\ldots$               &    $43\cdot 2$ &    4 &   8   \\
$\nn\pp\pp\mm$        &          $B_3$           & $0.8113\ldots$               &    $46\cdot 2$ &    4 &   8   \\
$\nn\pp\pp\pp$        &          $B_1$           & $0.8791\ldots$               &    $46\cdot 2$ &    4 &   8   \\
$\nn\pp\pp\ppmm$      &       $B_1^2B_2^2$       & $0.7396\ldots$               &   $244\cdot 2$ &   16 &   8   \\
$\nn\pp\nn\pp$        & $B_4B_{11}^2B_{13}B_6^2$ & $0.7298\ldots$               &   $804\cdot 2$ &   16 &   8   \\
$\nn\pp\nn\ppmm$      &   $B_{16}B_{52}B_{103}{}^?$ & $2/3+[0,\epsilon]$                        & $23152\cdot 2$ &  256 &   8   \\
$\ppmm\ppmm\ppmm\ppmm$& $B_{86}B_{52}B_{16}B_1$  & $3/4$                        &   $357\cdot 2$ &  256 &   8   \\
\hline  
$\nn\pp\mm\pp\nn$     &      $B_{11}B_{13}$      & $0.9163\ldots$               &  $1721\cdot 2$ &   16 &  16   \\
$\nn\pp\pp\pp\nn$     &         $B_4B_6$         & $0.9163\ldots$               &  $4559\cdot 2$ &   16 &  16   \\
\hline
$\nn\pp\pp\pp\pp\nn$  &            $B_2{}^?$         & $0.9614\ldots+[0,\epsilon]$               & $17902\cdot 2$ &   16 &  32   \\
\hline
$\pp\pp\pp\pp\pp\mm\nn$ & 			$B_3$        & $0.9761\ldots$ 				& $992\cdot 2$   &    4 &  64 \\
\hline
\end{tabular}
\label{table_capacity}
\end{table}

\begin{table}[tb]
\centering
\caption{Performance of various algorithms searching for s.m.p.s for a particular hard problem.
	The modified Gripenberg algorithm~\ref{alg_modgrip} fails for the set of matrices corresponding to the forbidden difference set $D_4$ in Section~\ref{sec_capacity}.
\emph{$dim$}: dimension of the matrices, 
\emph{lower bd.}: computed lower bound for the $\JSR$,
\emph{$J$}: number of matrices, 
\emph{time}: time needed by the algorithm.
}
\begin{tabular}{|l|l|c|r|}
\hline
\emph{Testset} & \emph{Algorithm}  & \emph{lower bd.} & \emph{time} \\
\hline
$D_4=\{\Circ\Circ\!+\Circ-\}$   & mod. invariant polytope & $1.6736\ldots$     & $40\,s$ \\
$J=256$                         & mod. Gripenberg         & $1.6\underline{6}63\ldots$     & $2\,s$   \\
$dim=16$                        & random Gripenberg       & $1.6\underline{6}63\ldots$     & $2\,s$   \\
                                & Gripenberg              & $1.6\underline{6}63\ldots$     & $60\,s$\\
                                & genetic                 & $1.6736\ldots$     & $10\,s$   \\    
\hline
\end{tabular}
\label{table_capacity_modgrip}
\end{table}

\subsection{H\"older exponents of Daubechies wavelets}\label{sec_daub}
An important application of the $\JSR$ is the computation of the regularity of \emph{refinable functions}.
These are functions $\phi\in C_0(\RR^s)$ which fulfil a functional equation of the form
 $\phi(x)=
\sum_{\alpha\in\ZZ^s}\allowbreak
a(\alpha)
\phi(2x-\alpha)$, 
$x\in\RR$, with $a\in\ell_0(\ZZ^s)$. 
We use the modified invariant polytope algorithm~\ref{alg_modinvpoly} to compute the Hölder regularity of the Daubechies wavelets $D_n$~\cite{D88}. The regularity of $D_2, D_3$, and $D_4$  was 
computed by Daubechies and Lagarias~\cite{DL1992},
Gripenberg~\cite{Grip96} computed it for $D_5, \ldots , D_8$, 
then Guglielmi and Protasov~\cite{GP16}, as a demonstration of the original invariant polytope algorithm,
computed the regularity of $D_9, \ldots , D_{20}$. 
Now with the modified invariant polytope algorithm,  
we can compute the H\"older regularity for Daubechies wavelets up to $D_{42}$.

As noted in~\cite[Section 6.2]{GP16}, the polytopes generated by these matrices are very flat 
and the introduction of nearly-candidates and extra-vertices tremendously increases the performance of the invariant polytope algorithm.
Respectively, using the wrong set of nearly-candidates, 
the modified invariant polytope algorithm did not terminate at all. 
These cases are marked with $\dagger$ in Table~\ref{table_daub_matrices}. 
The right nearly-candidates and extra-vertices,
i.e.\ good values for $\tau$, $T$, $B_{\text{nearly}}$ and $B_{\text{extra}}$,
were merely found by trial and error.
We report the number of extra-vertices and the vertices of the roots from the nearly-s.m.p.s together under \emph{\#Extra-V}. The number of the invariant polytopes vertice's is depicted in Figure~\ref{fig_daub} (left side).

\begin{remark}
With the new values for $D_{21}$ to $D_{42}$ we can refine the observation in~\cite{GP15poin}, 
that the differences of H\"older regularities $\alpha_n-\alpha_{n-1}$ seem to converge towards a value of $0.21$ or maybe even $0.2$, 
see Figure~\ref{fig_daub} (right side).
\end{remark}


\begin{figure}[t]
	\centering
	\includegraphics{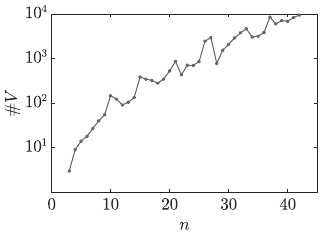}$\qquad$
   	\includegraphics{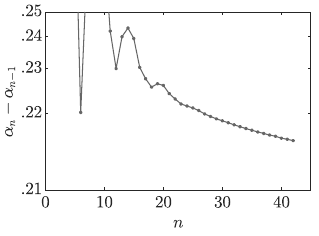}
    \vspace*{-.5em}
	\caption{Left: Number of vertices of the polytope $\#V\!$ against index of Daubechies wave\-let $D_n$. 
    Right: Difference of regularities $\alpha$ of consecutive Dau\-bechies wave\-lets.}
    \Description[The increase in vertices is non-monotone and seems to be exponential.]%
    {The increase in vertices is non-monotone and seems to be exponential.}
    \vspace{-1.5em}
	\label{fig_daub}
\end{figure}

\begin{table}[p]
\centering
\caption{H\"older regularity of Daubechies wavelets.
   \emph{$\alpha$}: H\"older regularity of Daubechies wavelet,
    \emph{$D_n$}: index of Daubechies wavelet,
    \emph{\#V}: number of vertices of the invariant polytope, 
    \emph{\#Extra-V}: number of extra-vertices including those from nearly-s.m.p.s.,
    \emph{s.m.p.}: an s.m.p.,
    \emph{time}: time needed to compute the invariant polytope.
For the cases marked with $\dagger$, using the wrong set of nearly-candidates, the algorithm did not terminate at all.
}
\begin{tabular}{|c|ccrrc|}
	\hline
\emph{~$D_n$~}   &  ~~~~\emph{s.m.p.}~~~~  & \emph{\#Extra-V} &    \emph{\#V}  &    \emph{time} &  \emph{$\alpha$} \\ \hline
            2  &      $B_0$      &      0           &     0$\cdot 2$ &        $<5\,s$ & $~0.55001\ldots$ \\ 
            3  &      $B_0$      &      0           &     3$\cdot 2$ &        $<5\,s$ & $~1.08783\ldots$ \\	
            4  &      $B_0$      &      2           &     9$\cdot 2$ &        $<5\,s$ & $~1.61793\ldots$ \\	
            5  & $B_0$ and $B_1$ &      2           &    14$\cdot 2$ &        $<5\,s$ & $~1.96896\ldots$ \\	
            6  & $B_0$ and $B_1$ &      3           &    18$\cdot 2$ &        $<5\,s$ & $~2.18914\ldots$ \\	
            7  & $B_0$ and $B_1$ &      4           &    27$\cdot 2$ &        $<5\,s$ & $~2.46041\ldots$ \\	
            8  & $B_0$ and $B_1$ &      5           &    40$\cdot 2$ &        $<5\,s$ & $~2.76082\ldots$ \\	
            9  & $B_0$ and $B_1$ &      6           &    55$\cdot 2$ &        $<5\,s$ & $~3.07361\ldots$ \\	
            10 &  $B_0^2B_1^2$   &      5           &   147$\cdot 2$ &        $<5\,s$ & $~3.36139\ldots$ \\	
            11 & $B_0$ and $B_1$ &      8           &   123$\cdot 2$ &         7$\,s$ & $~3.60347\ldots$ \\	
            12 & $B_0$ and $B_1$ &      9           &    91$\cdot 2$ &         7$\,s$ & $~3.83348\ldots$ \\	
            13 & $B_0$ and $B_1$ &     10           &   105$\cdot 2$ &         6$\,s$ & $~4.07348\ldots$ \\	
            14 & $B_0$ and $B_1$ &     11           &   134$\cdot 2$ &         8$\,s$ & $~4.31676\ldots$ \\	
            15 &  $B_0^4B_1^2$   &     11           &   386$\cdot 2$ &         6$\,s$ & $~4.55612\ldots$ \\	
            16 &  $B_0^2B_1^2$   &     12           &   346$\cdot 2$ &         7$\,s$ & $~4.78644\ldots$ \\	
            17 & $B_0$ and $B_1$ &     14           &   324$\cdot 2$ &         5$\,s$ & $~5.01380\ldots$ \\	
            18 & $B_0$ and $B_1$ &     15           &   282$\cdot 2$ &         8$\,s$ & $~5.23917\ldots$ \\	
            19 & $B_0$ and $B_1$ &     16           &   346$\cdot 2$ &         9$\,s$ & $~5.46532\ldots$ \\	
            20 & $B_0$ and $B_1$ &     17           &   529$\cdot 2$ &        12$\,s$ & $~5.69108\ldots$ \\	
            21 &  $B_0^2B_1^2$   &     17           &   868$\cdot 2$ &        15$\,s$ & $~5.91500\ldots$ \\	
22${^\dagger}$ &  $B_0^2B_1^4$   &     22           &   433$\cdot 2$ &         9$\,s$ & $~6.13779\ldots$ \\	
            23 & $B_0$ and $B_1$ &     20           &   707$\cdot 2$ &        18$\,s$ & $~6.35958\ldots$ \\	
            24 & $B_0$ and $B_1$ &     21           &   701$\cdot 2$ &        16$\,s$ & $~6.58096\ldots$ \\	
            25 & $B_0$ and $B_1$ &     22           &   861$\cdot 2$ &        20$\,s$ & $~6.80198\ldots$ \\	
            26 &  $B_0^4B_1^2$   &     22           &  2471$\cdot 2$ &        73$\,s$ & $~7.02250\ldots$ \\	
            27 &  $B_0^2B_1^2$   &     29           &  2952$\cdot 2$ &        60$\,s$ & $~7.24241\ldots$ \\	
28$^{\dagger}$ &  $B_0^2B_1^6$   &    105           &   777$\cdot 2$ &        24$\,s$ & $~7.46187\ldots$ \\	
            29 & $B_0$ and $B_1$ &     26           &  1545$\cdot 2$ &        39$\,s$ & $~7.68091\ldots$ \\	
            30 & $B_0$ and $B_1$ &     27           &  2078$\cdot 2$ &        64$\,s$ & $~7.89962\ldots$ \\	
            31 & $B_0$ and $B_1$ &     29           &  2898$\cdot 2$ &       190$\,s$ & $~8.11801\ldots$ \\	
            32 &  $B_0^2B_1^2$   &     29           &  3791$\cdot 2$ &       760$\,s$ & $~8.33605\ldots$ \\	
33$^{\dagger}$ &  $B_0^2B_1^2$   &     30           &  4692$\cdot 2$ &      1330$\,s$ & $~8.55379\ldots$ \\	
            34 & $B_0$ and $B_1$ &     32           &  3047$\cdot 2$ &       628$\,s$ & $~8.77123\ldots$ \\	
            35 & $B_0$ and $B_1$ &     33           &  3191$\cdot 2$ &       727$\,s$ & $~8.98841\ldots$ \\	
            36 & $B_0$ and $B_1$ &     34           &  3887$\cdot 2$ &       881$\,s$ & $~9.20533\ldots$ \\	
            37 &   $B_0^6B_1^2$  &     70           &  8529$\cdot 2$ &      6503$\,s$ & $~9.42202\ldots$ \\	
            38 &   $B_0^2B_1^2$  &     38           &  6035$\cdot 2$ &      3540$\,s$ & $~9.63847\ldots$ \\	
            39 &   $B_0^2B_1^4$  &     40           &  7142$\cdot 2$ &      3900$\,s$ & $~9.85474\ldots$ \\	
            40 & $B_0$ and $B_1$ &     38           &  6909$\cdot 2$ &      5550$\,s$ & $10.07073\ldots$ \\ 
            41 & $B_0$ and $B_1$ &     39           &  8343$\cdot 2$ &      8743$\,s$ & $10.28656\ldots$ \\ 
            42 & $B_0$ and $B_1$ &     40           &  9508$\cdot 2$ &     16373$\,s$ & $10.50220\ldots$ \\ 
        \hline
\end{tabular}
\label{table_daub_matrices}
\end{table}

\section{Conclusion and further work}
\subsection{Conclusion}
The modified Gripenberg algorithm~\ref{alg_modgrip} 
together with the modified invariant polytope algorithm~\ref{alg_modinvpoly} 
can compute the exact value of the $\JSR$ in a short time (less than 30~minutes) 
for most matrix families up to dimension $22$, in some cases even up to dimension $40$. 
For matrices with non-negative entries, the modified invariant polytope algorithm may work up to a dimension of $3000$.
Even more, since the modified Gripenberg algorithm~\ref{alg_modgrip} finds in almost all cases a correct s.m.p., 
it may be used alone for fast estimates of the $\JSR$ in time critical applications.

\subsection{Further work}
From the mathematical point of view, the question why the modified Gripenberg algorithm~\ref{alg_modgrip} works so well is of interest, in particular why it works mostly better than the \emph{random} Gripenberg algorithm.
It also may be useful to search for better estimates for the Minkowski norms, e.g.\ with orthant-monotonic norms, which would lead to a considerable speed up of the modified invariant polytope algorithm.
%

From the algorithmic point of view, the modified invariant polytope algorithm could be made faster by using approximate solutions to the LP-problem when computing the Minkowski-norms, since the exact value of the norms is of minor interest ---
for the modified invariant polytope algorithm it is enough to know whether a point is inside or outside of the polytope.


We plan to implement the case $(C)$ of complex leading eigenvalue in the near future 
and optimize the modified invariant polytope algorithm for a large number of parallel threads.
Case $(C)$ occurs seldom, in the sense that we did not encounter a set of matrices of practical interest 
with complex leading eigenvectors yet.


\begin{acks}
The author is grateful for the hospitality, help and encouragement of Prof.~V.~Yu.~Protasov.
The work is supported by the 
\grantsponsor{FWF}{Austrian Science Fund}{www.fwf.ac.at} under Grant
\grantnum[www.fwf.ac.at]{FWF}{P 28287}
and by
``Vienna Scientific Cluster'' (VSC) for providing computational resource

I would like to thank the referees for several helpful suggestions which greatly improved the presentation of this paper.
\end{acks}


\newpage

\bibliographystyle{ACM-Reference-Format}
\setcitestyle{numbers,sort&compress}
\bibliography{biblio}


\begin{thebibliography}{31}


\ifx \showCODEN    \undefined \def \showCODEN     #1{\unskip}     \fi
\ifx \showDOI      \undefined \def \showDOI       #1{#1}\fi
\ifx \showISBNx    \undefined \def \showISBNx     #1{\unskip}     \fi
\ifx \showISBNxiii \undefined \def \showISBNxiii  #1{\unskip}     \fi
\ifx \showISSN     \undefined \def \showISSN      #1{\unskip}     \fi
\ifx \showLCCN     \undefined \def \showLCCN      #1{\unskip}     \fi
\ifx \shownote     \undefined \def \shownote      #1{#1}          \fi
\ifx \showarticletitle \undefined \def \showarticletitle #1{#1}   \fi
\ifx \showURL      \undefined \def \showURL       {\relax}        \fi
\providecommand\bibfield[2]{#2}
\providecommand\bibinfo[2]{#2}
\providecommand\natexlab[1]{#1}
\providecommand\showeprint[2][]{arXiv:#2}

\bibitem[\protect\citeauthoryear{Ahmadi, Jungers, Parrilo, and
  Roozbehani}{Ahmadi et~al\mbox{.}}{2011}]%
        {AJPR11}
\bibfield{author}{\bibinfo{person}{A. Ahmadi}, \bibinfo{person}{Rapha{\"e}l
  Jungers}, \bibinfo{person}{Pablo~A. Parrilo}, {and}
  \bibinfo{person}{Mardavjij Roozbehani}.} \bibinfo{year}{2011}\natexlab{}.
\newblock \showarticletitle{Joint Spectral Radius and Path-Complete Graph
  Lyapunov Functions}.
\newblock \bibinfo{journal}{\emph{SIAM J. Control Optim.}}
  \bibinfo{volume}{52}, \bibinfo{number}{1} (\bibinfo{year}{2011}),
  \bibinfo{pages}{687--717}.
\newblock


\bibitem[\protect\citeauthoryear{Barabanov}{Barabanov}{1988}]%
        {Bar88}
\bibfield{author}{\bibinfo{person}{N.~E. Barabanov}.}
  \bibinfo{year}{1988}\natexlab{}.
\newblock \showarticletitle{Lyapunov indicator for discrete inclusions I--III}.
\newblock \bibinfo{journal}{\emph{Autom. Remote Control}} \bibinfo{volume}{49},
  \bibinfo{number}{2} (\bibinfo{year}{1988}), \bibinfo{pages}{152--157}.
\newblock


\bibitem[\protect\citeauthoryear{Berger and Wang}{Berger and Wang}{1992}]%
        {BW92}
\bibfield{author}{\bibinfo{person}{Marc~A. Berger} {and} \bibinfo{person}{Yang
  Wang}.} \bibinfo{year}{1992}\natexlab{}.
\newblock \showarticletitle{Bounded semigroups of matrices}.
\newblock \bibinfo{journal}{\emph{Linear Alg. Appl.}}  \bibinfo{volume}{166}
  (\bibinfo{year}{1992}), \bibinfo{pages}{21--27}.
\newblock


\bibitem[\protect\citeauthoryear{Blondel and Chang}{Blondel and Chang}{2011}]%
        {BC11}
\bibfield{author}{\bibinfo{person}{Vincent~D. Blondel} {and}
  \bibinfo{person}{Chia-Tche Chang}.} \bibinfo{year}{2011}\natexlab{}.
\newblock \bibinfo{title}{A genetic algorithm approach for the approximation of
  the joint spectral radius}.
\newblock
  \bibinfo{howpublished}{\url{https://perso.uclouvain.be/chia-tche.chang/code.php}}.
\newblock


\bibitem[\protect\citeauthoryear{Blondel and Chang}{Blondel and Chang}{2013}]%
        {BC13}
\bibfield{author}{\bibinfo{person}{Vincent~D. Blondel} {and}
  \bibinfo{person}{Chia-Tche Chang}.} \bibinfo{year}{2013}\natexlab{}.
\newblock \showarticletitle{An experimental study of approximation algorithms
  for the joint spectral radius}.
\newblock \bibinfo{journal}{\emph{Numer. Algor.}}  \bibinfo{volume}{64}
  (\bibinfo{year}{2013}), \bibinfo{pages}{181--202}.
\newblock


\bibitem[\protect\citeauthoryear{Blondel and Jungers}{Blondel and
  Jungers}{2008}]%
        {BJ08}
\bibfield{author}{\bibinfo{person}{Vincent~D. Blondel} {and}
  \bibinfo{person}{Rapha{\"e}l Jungers}.} \bibinfo{year}{2008}\natexlab{}.
\newblock \showarticletitle{On the finiteness property for rational matrices}.
\newblock \bibinfo{journal}{\emph{Linear Alg. Appl.}} \bibinfo{volume}{428},
  \bibinfo{number}{10} (\bibinfo{year}{2008}), \bibinfo{pages}{2283--2295}.
\newblock


\bibitem[\protect\citeauthoryear{Blondel, Jungers, and Protasov}{Blondel
  et~al\mbox{.}}{2006}]%
        {BJP06}
\bibfield{author}{\bibinfo{person}{Vincent~D. Blondel},
  \bibinfo{person}{Rapha{\"e}l Jungers}, {and} \bibinfo{person}{Vladimir~Yu.
  Protasov}.} \bibinfo{year}{2006}\natexlab{}.
\newblock \showarticletitle{On the Complexity of Computing the Capacity of
  Codes That Avoid Forbidden Difference Patterns}.
\newblock \bibinfo{journal}{\emph{IEEE Trans. Inf. Theory}}
  \bibinfo{volume}{52} (\bibinfo{year}{2006}), \bibinfo{pages}{5122--5127}.
\newblock


\bibitem[\protect\citeauthoryear{Blondel, Jungers, and Protasov}{Blondel
  et~al\mbox{.}}{2010}]%
        {BJP10}
\bibfield{author}{\bibinfo{person}{Vincent~D. Blondel},
  \bibinfo{person}{Rapha{\"e}l Jungers}, {and} \bibinfo{person}{Vladimir~Yu.
  Protasov}.} \bibinfo{year}{2010}\natexlab{}.
\newblock \showarticletitle{Joint spectral characteristics of matrices: a conic
  programming approach}.
\newblock \bibinfo{journal}{\emph{SIAM J. Matr. Anal. Appl.}}
  \bibinfo{volume}{31}, \bibinfo{number}{4} (\bibinfo{year}{2010}),
  \bibinfo{pages}{2146--2162}.
\newblock


\bibitem[\protect\citeauthoryear{Blondel, Nesterov, and Theys}{Blondel
  et~al\mbox{.}}{2005}]%
        {BN05}
\bibfield{author}{\bibinfo{person}{Vincent~D. Blondel}, \bibinfo{person}{Yurii
  Nesterov}, {and} \bibinfo{person}{Jacques Theys}.}
  \bibinfo{year}{2005}\natexlab{}.
\newblock \showarticletitle{On the accuracy of the ellipsoid norm approximation
  of the joint spectral radius}.
\newblock \bibinfo{journal}{\emph{Linear Algebra Appl.}} \bibinfo{volume}{394},
  \bibinfo{number}{1} (\bibinfo{year}{2005}), \bibinfo{pages}{91--107}.
\newblock


\bibitem[\protect\citeauthoryear{Blondel and Tsitsiklis}{Blondel and
  Tsitsiklis}{1997}]%
        {BT97}
\bibfield{author}{\bibinfo{person}{Vincent~D. Blondel} {and}
  \bibinfo{person}{John~N. Tsitsiklis}.} \bibinfo{year}{1997}\natexlab{}.
\newblock \showarticletitle{The Lyapunov exponent and joint spectral radius of
  pairs of matrices are hard -- when not impossible -- to compute and to
  approximate}.
\newblock \bibinfo{journal}{\emph{Math. Control Sign. Syst.}}
  \bibinfo{volume}{10}, \bibinfo{number}{1} (\bibinfo{year}{1997}),
  \bibinfo{pages}{31--40}.
\newblock


\bibitem[\protect\citeauthoryear{Blondel and Tsitsiklis}{Blondel and
  Tsitsiklis}{2000}]%
        {BT00}
\bibfield{author}{\bibinfo{person}{Vincent~D. Blondel} {and}
  \bibinfo{person}{John~N. Tsitsiklis}.} \bibinfo{year}{2000}\natexlab{}.
\newblock \showarticletitle{The boundedness of all products of a pair of
  matrices is undecidable}.
\newblock \bibinfo{journal}{\emph{Syst. Control Lett.}} \bibinfo{volume}{41},
  \bibinfo{number}{2} (\bibinfo{year}{2000}), \bibinfo{pages}{135--140}.
\newblock


\bibitem[\protect\citeauthoryear{Charina and Mejstrik}{Charina and
  Mejstrik}{2018}]%
        {CM18}
\bibfield{author}{\bibinfo{person}{Maria Charina} {and} \bibinfo{person}{Thomas
  Mejstrik}.} \bibinfo{year}{2018}\natexlab{}.
\newblock \showarticletitle{Multiple multivariate subdivision schemes: matrix
  and operator approaches}.
\newblock \bibinfo{journal}{\emph{Comput. Appl. Math.}}  \bibinfo{volume}{349}
  (\bibinfo{year}{2018}), \bibinfo{pages}{279--291}.
\newblock


\bibitem[\protect\citeauthoryear{Charina and Protasov}{Charina and
  Protasov}{2019}]%
        {CP17}
\bibfield{author}{\bibinfo{person}{Maria Charina} {and}
  \bibinfo{person}{Vladimir~Yu. Protasov}.} \bibinfo{year}{2019}\natexlab{}.
\newblock \showarticletitle{Regularity of anisotropic refinable functions}.
\newblock \bibinfo{journal}{\emph{Appl. Comput. Harm. A.}}
  \bibinfo{volume}{47}, \bibinfo{number}{3} (\bibinfo{year}{2019}),
  \bibinfo{pages}{795--821}.
\newblock


\bibitem[\protect\citeauthoryear{Daubechies}{Daubechies}{1988}]%
        {D88}
\bibfield{author}{\bibinfo{person}{Ingrid Daubechies}.}
  \bibinfo{year}{1988}\natexlab{}.
\newblock \showarticletitle{Orthonormal bases of compactly supported wavelets}.
\newblock \bibinfo{journal}{\emph{Comm. Pure Appl. Math.}}
  \bibinfo{volume}{41} (\bibinfo{year}{1988}).
\newblock


\bibitem[\protect\citeauthoryear{Daubechies and Lagarias}{Daubechies and
  Lagarias}{1992}]%
        {DL1992}
\bibfield{author}{\bibinfo{person}{Ingrid Daubechies} {and}
  \bibinfo{person}{Jeffrey~C. Lagarias}.} \bibinfo{year}{1992}\natexlab{}.
\newblock \showarticletitle{Two-scale difference equations. ii. local
  regularity, infinite products of matrices and fractals}.
\newblock \bibinfo{journal}{\emph{SIAM J. Math. Anal.}} \bibinfo{volume}{23},
  \bibinfo{number}{4} (\bibinfo{year}{1992}), \bibinfo{pages}{1031--1079}.
\newblock


\bibitem[\protect\citeauthoryear{Gripenberg}{Gripenberg}{1996}]%
        {Grip96}
\bibfield{author}{\bibinfo{person}{Gustav Gripenberg}.}
  \bibinfo{year}{1996}\natexlab{}.
\newblock \showarticletitle{Computing the joint spectral radius}.
\newblock \bibinfo{journal}{\emph{Linear Alg. Appl.}}  \bibinfo{volume}{234}
  (\bibinfo{year}{1996}), \bibinfo{pages}{43--60}.
\newblock


\bibitem[\protect\citeauthoryear{Guglielmi and Protasov}{Guglielmi and
  Protasov}{2013}]%
        {GP13}
\bibfield{author}{\bibinfo{person}{Nicola Guglielmi} {and}
  \bibinfo{person}{Vladimir~Yu. Protasov}.} \bibinfo{year}{2013}\natexlab{}.
\newblock \showarticletitle{Exact Computation of Joint Spectral Characteristics
  of Linear Operators}.
\newblock \bibinfo{journal}{\emph{Found. Comput. Math.}}  \bibinfo{volume}{13}
  (\bibinfo{year}{2013}), \bibinfo{pages}{37--39}.
\newblock


\bibitem[\protect\citeauthoryear{Guglielmi and Protasov}{Guglielmi and
  Protasov}{2015}]%
        {GP15poin}
\bibfield{author}{\bibinfo{person}{Nicola Guglielmi} {and}
  \bibinfo{person}{Vladimir~Yu. Protasov}.} \bibinfo{year}{2015}\natexlab{}.
\newblock \showarticletitle{Matrix approach to the global and local regularity
  of wavelets}.
\newblock \bibinfo{journal}{\emph{Poincare J. Anal. Appl.}}
  \bibinfo{volume}{2} (\bibinfo{year}{2015}), \bibinfo{pages}{77--92}.
\newblock


\bibitem[\protect\citeauthoryear{Guglielmi and Protasov}{Guglielmi and
  Protasov}{2016}]%
        {GP16}
\bibfield{author}{\bibinfo{person}{Nicola Guglielmi} {and}
  \bibinfo{person}{Vladimir~Yu. Protasov}.} \bibinfo{year}{2016}\natexlab{}.
\newblock \showarticletitle{Invariant polytopes of linear operators with
  applications to regularity of wavelets and of subdivisions}.
\newblock \bibinfo{journal}{\emph{SIAM J. Matrix Anal. \& Appl.}}
  \bibinfo{volume}{37}, \bibinfo{number}{1} (\bibinfo{year}{2016}),
  \bibinfo{pages}{18--52}.
\newblock


\bibitem[\protect\citeauthoryear{Guglielmi, Wirth, and Zennaro}{Guglielmi
  et~al\mbox{.}}{2005}]%
        {GWZ05}
\bibfield{author}{\bibinfo{person}{Nicola Guglielmi}, \bibinfo{person}{Fabian
  Wirth}, {and} \bibinfo{person}{Marco Zennaro}.}
  \bibinfo{year}{2005}\natexlab{}.
\newblock \showarticletitle{Complex polytope extremality results for families
  of matrices}.
\newblock \bibinfo{journal}{\emph{SIAM J. Matrix Anal. Appl.}}
  \bibinfo{volume}{27}, \bibinfo{number}{3} (\bibinfo{year}{2005}),
  \bibinfo{pages}{721--743}.
\newblock


\bibitem[\protect\citeauthoryear{Guglielmi and Zennaro}{Guglielmi and
  Zennaro}{2008}]%
        {GZ08}
\bibfield{author}{\bibinfo{person}{Nicola Guglielmi} {and}
  \bibinfo{person}{Marco Zennaro}.} \bibinfo{year}{2008}\natexlab{}.
\newblock \showarticletitle{An algorithm for finding extremal polytope norms of
  matrix families}.
\newblock \bibinfo{journal}{\emph{Linear Alg. Appl.}} \bibinfo{volume}{428},
  \bibinfo{number}{10} (\bibinfo{year}{2008}), \bibinfo{pages}{2265--2282}.
\newblock


\bibitem[\protect\citeauthoryear{Guglielmi and Zennaro}{Guglielmi and
  Zennaro}{2009}]%
        {GZ09}
\bibfield{author}{\bibinfo{person}{Nicola Guglielmi} {and}
  \bibinfo{person}{Marco Zennaro}.} \bibinfo{year}{2009}\natexlab{}.
\newblock \showarticletitle{Finding extremal complex polytope norms for
  families of real matrices}.
\newblock \bibinfo{journal}{\emph{SIAM J. Matrix Anal. Appl.}}
  \bibinfo{volume}{31}, \bibinfo{number}{2} (\bibinfo{year}{2009}),
  \bibinfo{pages}{602--620}.
\newblock


\bibitem[\protect\citeauthoryear{Gurvits}{Gurvits}{1995}]%
        {Gur95}
\bibfield{author}{\bibinfo{person}{Leonid Gurvits}.}
  \bibinfo{year}{1995}\natexlab{}.
\newblock \showarticletitle{Stability of discrete linear inclusion}.
\newblock \bibinfo{journal}{\emph{Linear Alg. Appl.}}  \bibinfo{volume}{231}
  (\bibinfo{year}{1995}), \bibinfo{pages}{47--85}.
\newblock


\bibitem[\protect\citeauthoryear{Hare, Morris, Sidorov, and Theys}{Hare
  et~al\mbox{.}}{2011}]%
        {HMST11}
\bibfield{author}{\bibinfo{person}{Kevin~G. Hare}, \bibinfo{person}{Ian~D.
  Morris}, \bibinfo{person}{Nikita Sidorov}, {and} \bibinfo{person}{Jacques
  Theys}.} \bibinfo{year}{2011}\natexlab{}.
\newblock \showarticletitle{An explicit counterexample to the Lagarias--Wang
  finiteness conjecture}.
\newblock \bibinfo{journal}{\emph{Adv. Math.}} \bibinfo{volume}{226},
  \bibinfo{number}{6} (\bibinfo{year}{2011}), \bibinfo{pages}{4667--4701}.
\newblock


\bibitem[\protect\citeauthoryear{Hendrickx, Jungers, and
  Vankeerberghen}{Hendrickx et~al\mbox{.}}{2014}]%
        {Jung2014}
\bibfield{author}{\bibinfo{person}{Julien~M. Hendrickx},
  \bibinfo{person}{Rapha{\"e}l Jungers}, {and} \bibinfo{person}{Guillaume
  Vankeerberghen}.} \bibinfo{year}{2014}\natexlab{}.
\newblock \bibinfo{title}{JSR: A Toolbox to Compute the Joint Spectral Radius}.
\newblock
  \bibinfo{howpublished}{\url{mathworks.com/matlabcentral/fileexchange/33202}}.
\newblock


\bibitem[\protect\citeauthoryear{Kozyakin}{Kozyakin}{2010}]%
        {Koz10b}
\bibfield{author}{\bibinfo{person}{Victor~S. Kozyakin}.}
  \bibinfo{year}{2010}\natexlab{}.
\newblock \showarticletitle{Iterative building of Barabanov norms and
  computation of the joint spectral radius for matrix sets}.
\newblock \bibinfo{journal}{\emph{Discrete Continuous Dyn. Syst. Ser. B}}
  \bibinfo{volume}{14}, \bibinfo{number}{1} (\bibinfo{year}{2010}),
  \bibinfo{pages}{143--158}.
\newblock


\bibitem[\protect\citeauthoryear{Möller and Reif}{Möller and Reif}{2014}]%
        {MR01}
\bibfield{author}{\bibinfo{person}{Claudia Möller} {and}
  \bibinfo{person}{Ulrich Reif}.} \bibinfo{year}{2014}\natexlab{}.
\newblock \showarticletitle{A tree-based approach to joint spectral radius
  determination}.
\newblock \bibinfo{journal}{\emph{Linear Alg. Appl.}}  \bibinfo{volume}{463}
  (\bibinfo{year}{2014}), \bibinfo{pages}{154--170}.
\newblock


\bibitem[\protect\citeauthoryear{Moision, Orlitsky, and Siegel}{Moision
  et~al\mbox{.}}{2001}]%
        {MOS01}
\bibfield{author}{\bibinfo{person}{Bruce~E. Moision}, \bibinfo{person}{Alon
  Orlitsky}, {and} \bibinfo{person}{Paul~H. Siegel}.}
  \bibinfo{year}{2001}\natexlab{}.
\newblock \showarticletitle{On codes that avoid specified differences}.
\newblock \bibinfo{journal}{\emph{IEEE Trans. Inf. Theory}}
  \bibinfo{volume}{47} (\bibinfo{year}{2001}), \bibinfo{pages}{433--442}.
\newblock


\bibitem[\protect\citeauthoryear{Parrilo and Jadbabaie}{Parrilo and
  Jadbabaie}{2008}]%
        {PJ08}
\bibfield{author}{\bibinfo{person}{Pablo~A. Parrilo} {and} \bibinfo{person}{Ali
  Jadbabaie}.} \bibinfo{year}{2008}\natexlab{}.
\newblock \showarticletitle{Approximation of the joint spectral radius using
  sum of squares}.
\newblock \bibinfo{journal}{\emph{Linear Alg. Appl.}} \bibinfo{volume}{428},
  \bibinfo{number}{10} (\bibinfo{year}{2008}), \bibinfo{pages}{2385--2402}.
\newblock


\bibitem[\protect\citeauthoryear{Protasov}{Protasov}{2000}]%
        {Prot00}
\bibfield{author}{\bibinfo{person}{Vladimir~Yu. Protasov}.}
  \bibinfo{year}{2000}\natexlab{}.
\newblock \showarticletitle{Asymptotic behaviour of the partition function}.
\newblock \bibinfo{journal}{\emph{Sb. Math.}} \bibinfo{volume}{191},
  \bibinfo{number}{3--4} (\bibinfo{year}{2000}), \bibinfo{pages}{230--233}.
\newblock


\bibitem[\protect\citeauthoryear{Rota and Strang}{Rota and Strang}{1960}]%
        {Rota60}
\bibfield{author}{\bibinfo{person}{Gian-Carlo Rota} {and}
  \bibinfo{person}{Gilbert~W. Strang}.} \bibinfo{year}{1960}\natexlab{}.
\newblock \showarticletitle{A note on the joint spectral radius}.
\newblock \bibinfo{journal}{\emph{Kon. Nederl. Acad. Wet. Proc.}}
  \bibinfo{volume}{63} (\bibinfo{year}{1960}), \bibinfo{pages}{379--381}.
\newblock


\end{thebibliography}

\end{document}